\newtheorem{thm}{Theorem}[section]
\newtheorem{lem}[thm]{Lemma}
\newtheorem{cor}[thm]{Corollary}
\newtheorem{prop}[thm]{Proposition}
\newtheorem*{conjecture*}{Conjecture}
\theoremstyle{remark} 
\newtheorem*{question*}{Question}
\newtheorem{remark}[thm]{Remark}
\newtheorem{example}[thm]{Example}
\theoremstyle{definition}
\numberwithin{equation}{section}  % number equations by section
\newcommand{\OO}{\mathcal{O}}    % Fancy script O
\newcommand{\FF}{\mathbb{F}}      % Finite field
\newcommand{\ZZ}{\mathbb{Z}}     % Integers
\newcommand{\PP}{\mathbb{P}}      % Projective space
\newcommand{\Aff}{\mathbb{A}}      % Affine space
\newcommand{\QQ}{\mathbb{Q}}      %Rationals
\newcommand{\CC}{\mathbb{C}}      % Complex Numbers
\newcommand{\mm}{\mathfrak{m}}   % maximal ideal
\newcommand{\be}{\begin{equation}}
\newcommand{\ee}{\end{equation}}
\newcommand{\benn}{\begin{equation*}}
\newcommand{\eenn}{\end{equation*}}
\newcommand{\ba}{\begin{aligned}}
\newcommand{\ea}{\end{aligned}}
\newcommand{\bbm}{\begin{bmatrix}}
\newcommand{\ebm}{\end{bmatrix}}
\newcommand{\bpm}{\begin{pmatrix}}
\newcommand{\epm}{\end{pmatrix}}
\newcommand{\bi}{\begin{itemize}}
\newcommand{\ei}{\end{itemize}}
\newcommand{\xhelp}[1]{\textbf{Xander: #1}}
\newcommand{\Spec}{\operatorname{Spec}}
\newcommand{\an}[1]{\operatorname{an}}  % analytic space notation
 \newcommand{\Rat}{\operatorname{Rat}}    % space of rational maps
\newcommand{\PGL}{\mathrm{PGL}}
\newcommand{\Crit}[1]{\mathrm{Crit}\left(#1\right)_{\mathrm{f}}}
\newtheorem*{obs*}{Key Fact}
\newcommand{\Insep}{\mathrm{Insep}}
\newcommand{\Res}{\mathrm{Res}}
\newcommand{\uni}{U_d^{(\infty)}}
\newcommand{\sign}{\mathrm{sign}}
\newcommand{\Poly}{\mathrm{Poly}}
\newcommand{\SL}{\mathrm{SL}}
\newcommand{\UU}{\mathfrak{U}}
\title{Rational Functions with a Unique Critical Point}
\author{Xander Faber \\
Department of Mathematics \\
University of Georgia \\
Athens, GA  30602 \\ 
\texttt{xander@math.uga.edu}
}
 \date{}
\begin{document}
\maketitle
	\begin{abstract}
		Over an algebraically closed field of positive characteristic, there exist rational functions with only one 
		  critical point. We give an elementary characterization of these
		 functions in terms of their continued fraction expansions. 
		  % As a corollary, we deduce that the degree of such a rational function is congruent to $0$ or $1$ modulo 
		  % the characteristic of the field. 
		Then we use this tool to discern some of the basic geometry of the space of unicritical rational functions, as
		well as its quotients by the $\SL_2$-actions of conjugation and postcomposition. 
		% deduce that the parameter space of unicritical rational functions is irreducible rational 
		% quasi-projective variety and compute its dimension.  
		We also give an application to dynamical systems with 
		restricted ramification defined over non-Archimedean fields of positive residue characteristic. \\
		
		\noindent \textit{2010 Mathematics Subject Classification.} 37P45 % Families and Moduli Spaces of Dynamical Systems
		(primary); 11A55, % Continued Fractions 
		14H05 % Algebraic Functions; Function Fields 
        (secondary) \newline
	\noindent
        \textit{Keywords.} rational function, positive characteristic, critical point, continued fraction, dynamical system, good reduction
	\end{abstract}

\maketitle

%%%%%%%%%%%%%%%%%%%
%%%%%%%%%%%%%%%%%%%

\section{Introduction}

    A nonconstant rational function $\phi(z) = f(z) / g(z)$  defined over the complex numbers is either an automorphism of $\PP^1(\CC) = \CC \cup \{\infty\}$, or else it must have at least two critical points --- i.e., points of $\PP^1(\CC)$ at which the associated map on tangent spaces vanishes. 
% For example, it follows easily from the Hurwitz formula.  %\cite[\S4.2]{Hartshorne_Bible}. 
% The Lefschetz principle (or the Hurwitz formula again) allows one to say the same thing over an arbitrary field of characteristic zero. 
But if $F$ is an algebraically closed field of positive characteristic, then there exist rational functions with only one critical point, where the map must necessarily be wildly ramified. Let us call them \textbf{unicritical functions}. For example, the function $\phi(z) = z^p - z$ has $\infty$ as its only critical point. 

	In this article we study some of the geometric and dynamical properties of unicritical rational functions. This can be viewed as a characteristic~$p$ analogue of the work of Milnor \cite{Milnor_Two_Critical_Points_2000} and others on rational functions over $\CC$ with exactly two critical points. Our first goal is to characterize those functions with a unique critical point in terms of their continued fraction expansions. 
% By analogy with the theory of rational numbers, we may describe a rational function using its continued fraction expansion. 
Given a rational function $\phi \in F(z)$, repeated use of the division algorithm produces unique polynomials $f_0, \ldots, f_n$ with coefficients in $F$ such that 
	\[
		\phi(z) =  f_0(z) + \frac{1}{f_1(z) + \frac{1}{\ddots + \frac{1}{f_{n}(z)} } }.
	\]
The expression on the right is called the \textbf{continued fraction expansion} of $\phi$; for brevity it is often written as $\phi = [f_0, f_1, \ldots, f_n]$. The construction of the continued fraction expansion forces $f_1, \ldots, f_n$ to be nonconstant (but not $f_0$).
% See, e.g., \cite{Hensley_Continued_Fractions} for an exposition at this level of generality. 
% (See the next section for the relevant definitions.)

\newpage
    
\begin{thm}
\label{Thm: No Finite Critical}
    Suppose $F$ is an algebraically closed field of positive characteristic~$p$. A rational function 
    $\phi \in F(z)$ has no finite critical point if and only if its continued fraction expansion has the form 
	\be
	\label{Eq: Special Form} 
    	\phi(z) = [q_0(z^p), q_1(z^p), \ldots, q_n(z^p)+ az]
	\ee 
	for some integer $n \geq 0$, polynomials 
    $q_0, q_1, \ldots, q_n \in F[z]$, and a nonzero element $a \in F$. 
\end{thm}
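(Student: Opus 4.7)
The plan is to reformulate ``no finite critical point'' as a purely algebraic condition on the numerator of $\phi'$, then to induct on the length $n$ via the continued-fraction recursion $\phi = f_0 + 1/\psi$ with $\psi = [f_1, \ldots, f_n]$.

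First I would establish the following characterization: if $\phi = p/q$ is written in lowest terms, then $\phi$ has no finite critical point if and only if $D := p'q - pq' \in F^*$. At a finite point $z_0$ with $q(z_0) \neq 0$, the equality $D(z_0) = 0$ is equivalent to $\phi'(z_0) = 0$, i.e., $z_0$ being critical. At a finite pole $z_0$ of $\phi$ (so $q(z_0) = 0$ and $p(z_0) \neq 0$, since $\gcd(p,q)=1$), we have $D(z_0) = -p(z_0)\,q'(z_0)$, which vanishes precisely when $q'(z_0) = 0$, equivalently when $(z-z_0)^2 \mid q$, which is the condition that $\phi$ has a pole of order $\geq 2$ at $z_0$. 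Thus the finite zeros of $D$ correspond exactly to the finite critical points of $\phi$, and the characterization follows (noting that $D = 0$ forces $\phi$ inseparable and hence every finite point critical).

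For the inductive step, write $\psi = P/Q$ in lowest terms. Since $\gcd(P,Q) = 1$, one checks that $\phi = (f_0 P + Q)/P$ is also in lowest terms, and the quotient rule gives
\[
\phi' \;=\; \frac{f_0' P^2 - (P'Q - PQ')}{P^2}.
\]
Because $\deg P - \deg Q = \deg f_1 \geq 1$, we have $\deg(P'Q - PQ') \leq \deg P + \deg Q - 1 < 2\deg P$. So if $f_0' \neq 0$, the numerator above has degree $\deg f_0' + 2\deg P \geq 2$ and cannot lie in $F^*$. Thus the condition forces $f_0' = 0$, equivalently $f_0 \in F[z^p]$, and the numerator then reduces to $-(P'Q - PQ')$, whose belonging to $F^*$ is exactly the condition that $\psi$ has no finite critical point (by the first step).

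The theorem now follows by induction on $n$. The base case $n = 0$ says $\phi = f_0 \in F[z]$ has no finite critical point iff $f_0' \in F^*$, iff $f_0 = q_0(z^p) + az$ with $a \in F^*$. For the inductive step, combining the reduction above with the inductive hypothesis applied to $\psi = [f_1, \ldots, f_n]$ yields precisely the desired form for $\phi$. The main obstacle is securing the degree bound in the displayed equation; once that is in hand, the induction unwinds routinely.
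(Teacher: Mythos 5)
Your proposal is correct and follows essentially the same route as the paper: both proofs hinge on the decomposition $\phi = f_0 + 1/\psi$ coming from the continued fraction, the fact that a polynomial has vanishing derivative iff it lies in $F[z^p]$, and the same degree comparison $\deg(P'Q - PQ') \le \deg P + \deg Q - 1 < \deg(f_0'P^2)$ (this is word-for-word the paper's ``Key Fact'' computation). The only organizational difference is that your up-front criterion ``no finite critical point iff $p'q - pq' \in F^*$'' absorbs the paper's separate reciprocal lemma and lets you run both directions of the equivalence through a single biconditional induction on the length of the expansion, rather than proving the two implications by separate inductions as the paper does.
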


	For $\phi$ as in~\eqref{Eq: Special Form}, the $j$th \textit{convergent} of $\phi$ is given by $[q_0(z^p), \ldots, q_j(z^p)]$ for $j < n$. Note that these are inseparable rational functions. By analogy with the classical theory of continued fractions, one expects the convergents of a rational function to be good approximations to it. Hence we may think of $\phi$ as being well-approximated by inseparable functions. 

\begin{cor}
	Let $F$ be an algebraically closed field of positive characteristic $p$, and let $\phi \in F(z)$ be a rational function with a single critical point $c$. Then there exist polynomials $q_0, \ldots, q_n \in F[z]$ and a nonzero element $a \in F$ such that  $\phi \circ \sigma (z) = [q_0(z^p), q_1(z^p), \ldots, q_n(z^p)+ az]$, where $\sigma(z) = z$ if $c = \infty$ and $\sigma(z) = c + 1 / z $ otherwise.   
\end{cor}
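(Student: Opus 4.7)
The plan is to reduce the corollary directly to Theorem~\ref{Thm: No Finite Critical} by choosing a change of coordinate $\sigma$ that carries the critical point $c$ to $\infty$, so that the composition $\phi \circ \sigma$ inherits a single critical point, now located at $\infty$. First I would verify that $\sigma$ as specified is an automorphism of $\PP^1_F$ with $\sigma(\infty) = c$. If $c = \infty$ this is obvious, and if $c \neq \infty$ then $\sigma(z) = c + 1/z$ is a Möbius transformation with $\sigma(\infty) = c$. In either case $\sigma \in \PGL_2(F)$.

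Next I would argue that the unique critical point of $\psi := \phi \circ \sigma$ is $\infty$. Because $\sigma$ is an automorphism of $\PP^1_F$, it is unramified at every point. By the multiplicativity of ramification indices under composition (equivalently, the chain rule applied in local coordinates at each point), the ramification index of $\psi$ at $x$ equals the ramification index of $\phi$ at $\sigma(x)$. Consequently, $x$ is a critical point of $\psi$ if and only if $\sigma(x)$ is a critical point of $\phi$. Since by hypothesis $\phi$ has exactly one critical point, namely $c$, it follows that $\psi$ has exactly one critical point, namely $\sigma^{-1}(c) = \infty$. In particular, $\psi$ has no finite critical point.

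Finally I would apply Theorem~\ref{Thm: No Finite Critical} to $\psi$, which immediately furnishes the continued fraction expansion of $\phi \circ \sigma$ in the required form $[q_0(z^p), q_1(z^p), \ldots, q_n(z^p) + az]$. I do not anticipate any serious obstacle: the corollary is essentially a change-of-variables restatement of the theorem. The only minor piece of bookkeeping is making sure $\sigma$ is normalized so that $\sigma(\infty) = c$ (rather than $\sigma(c) = \infty$), which is exactly the choice baked into the statement and which ensures the chain-rule argument above produces a critical point at $\infty$ rather than somewhere finite.
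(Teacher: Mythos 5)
Your proposal is correct and follows the same route as the paper: the paper's proof is the single sentence that $\phi \circ \sigma$ has no finite critical point, so Theorem~\ref{Thm: No Finite Critical} applies. Your chain-rule justification that precomposition with the automorphism $\sigma$ moves the unique critical point to $\sigma^{-1}(c) = \infty$ is exactly the detail the paper leaves implicit.
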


\begin{proof}
	The function $\phi \circ \sigma$ has no finite critical point, so the theorem applies. 
\end{proof}

%	We note that the behavior of critical points over non-algebraically closed fields is a bit more subtle. For example, let $f(z) = z^2 + Az + B$ be an irreducible quadratic polynomial with coefficients in $\FF_p$, $p > 2$. Replacing $z$ by $z+1$ if necessary, we may assume that $A \neq 0$. Then the rational function $\phi(z) = (Az^2 + 2Bz) / (2z + A)$ has no $\FF_p$-rational critical point, but it has two distinct critical points defined over the splitting field of $f$. 

% \begin{example}[Fractional Linear Functions]
%    Let $\phi(z) = \frac{Az + B}{Cz  + D}$ with $A, B, C, D \in F$ satisfying $\Delta = AD - BC \neq 0$. Since $\phi$ is invertible, it has no critical point in $\PP^1(F)$. If $C = 0$, then $\phi(z) = [B / D + Az / D]$ is already in the correct form. Otherwise, its continued fraction expansion is given by
%    \[
%       \phi(z) = [q_0, q_1 + cz] = q_0 + \frac{1}{q_1 + cz}, \text{where}
%    \]
%    \[
%        q_0 = \frac{A}{C}, \ \ q_1 = - \frac{CD}{\Delta}, \ \ c = - \frac{C^2}{\Delta}.
%    \]
%\end{example}

	A simple induction shows that the degree of the rational function $\phi = [f_0, f_1, \ldots, f_n] \in F(z)$ is
	\[
		\deg(\phi) = \max\{\deg(f_0), 0\} + \sum_{1 \leq i \leq n} \deg(f_i).
	\]
The above theorem immediately places an interesting restriction on the degree of a rational function with a single critical point.

\begin{cor}
\label{Cor: Congruence}
		Let $F$ be an algebraically closed field of positive characteristic $p$, and let $\phi \in F(z)$ be a unicritical rational function. Then 
		\[
			\deg(\phi) \equiv 0 \text{ or } 1 \pmod p.
		\]  
\end{cor}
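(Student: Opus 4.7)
The plan is to reduce the problem directly to the previous corollary and the displayed degree formula for continued fractions, then read off the congruence from the shape of the exponents that appear.

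First I would observe that the map $\sigma$ in the preceding corollary is a Möbius transformation, so $\deg(\phi \circ \sigma) = \deg(\phi)$. Thus it suffices to prove the congruence for a rational function of the form
\[
\psi(z) = [q_0(z^p),\, q_1(z^p),\, \ldots,\, q_n(z^p) + az],
\]
with $a \in F^\times$ and $q_0, \ldots, q_n \in F[z]$, subject to the constraint that the partial quotients with index $\geq 1$ are nonconstant (so $\deg q_i \geq 1$ for $1 \leq i \leq n-1$).

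Next I would apply the displayed degree formula to $\psi$, writing
\[
\deg(\psi) = \max\{\deg q_0(z^p), 0\} + \sum_{i=1}^{n-1} \deg q_i(z^p) + \deg\!\bigl(q_n(z^p) + az\bigr).
\]
Every summand except possibly the last equals $p\deg q_i$ (or $0$ when $q_0$ is constant), hence is divisible by $p$. For the final summand there are two cases: if $\deg q_n \geq 1$ then $\deg(q_n(z^p) + az) = p\deg q_n \equiv 0 \pmod p$, while if $q_n$ is a constant then $q_n(z^p) + az$ is linear (because $a \neq 0$), contributing $1$. The boundary case $n = 0$, where $\psi = q_0(z^p) + az$, is handled by the same dichotomy. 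In either case the total degree is congruent to $0$ or $1$ modulo $p$, completing the proof.

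There is essentially no obstacle here; the argument is a direct bookkeeping consequence of Theorem \ref{Thm: No Finite Critical} together with the multiplicativity of $p$ inside $q_i(z^p)$. The only mild subtlety is the verification that the last partial quotient $q_n(z^p) + az$ is the unique term whose degree need not be a multiple of $p$, which follows from $a \neq 0$ and the fact that all interior partial quotients are pure polynomials in $z^p$.
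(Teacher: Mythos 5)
Your proof is correct and is exactly the argument the paper intends: the paper gives no explicit proof, stating only that the congruence follows "immediately" from the degree formula for continued fractions together with Theorem~\ref{Thm: No Finite Critical} (via the preceding corollary's degree-preserving precomposition by $\sigma$). Your case analysis on the final partial quotient $q_n(z^p)+az$ is the same bookkeeping, and it is carried out correctly.
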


	The space of rational functions of degree~$d$, denoted $\Rat_d$, is realized as a Zariski open subset of  $\PP^{2d+1}$ via the identification
	\[
		\phi(z) = \frac{a_dz^d + \cdots + a_0}{b_dz^d + \cdots + b_0} \mapsto 
		(a_d : \cdots : a_0 : b_d:  \cdots : b_0).
	\]
Over the complex numbers, the topology of $\Rat_d$ was first investigated by Segal \cite{Segal_Rational_Functions_1979} and then later by Milnor in the case $d = 2$ \cite{Milnor_Rational_Maps_1993}. Silverman has shown that it is a fine solution to the relevant moduli problem in the context of algebraic geometry \cite{Silverman_Rational_Maps_1998}. Here we will define a subvariety $U_d \subset \Rat_d$ whose geometric points correspond to rational functions of degree~$d$ possessing a unique critical point; we call it the \textbf{unicritical locus}. The coefficients of the polynomials in the continued fraction expansion may be used as generic coordinates on the unicritical locus; we exploit this observation to deduce some basic features of the geometry of $U_d$.

\begin{thm}
\label{Thm: Unicritical Geometry}
	Fix a prime field $\FF_p$ and an integer $d > 1$. The unicritical locus $U_d$ is empty if $d \not\equiv 0,1 \pmod p$, and otherwise it is an irreducible rational quasi-projective variety over $\FF_p$ satisfying
	\[
		\dim(U_d) = \begin{cases} 
				3 + 2d / p & \text{if } p \mid d \\
				4 + 2(d-1) / p & \text{if } p \mid d-1.
			\end{cases}
	\]
\end{thm}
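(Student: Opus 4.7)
Emptiness when $d \not\equiv 0, 1 \pmod p$ is immediate from Corollary~\ref{Cor: Congruence}, so I assume henceforth $d \equiv 0$ or $1 \pmod p$. My plan is to parametrize $U_d$ via the continued fraction data supplied by the corollary to Theorem~\ref{Thm: No Finite Critical}: every $\phi \in U_d$ is uniquely determined by its critical point $c$ together with the tuple $(q_0, \ldots, q_n, a) \in F[z]^{n+1} \times F^\times$ in the continued fraction expansion of $\phi \circ \sigma$. I stratify $U_d$ by the \emph{pattern} $\tau = (n, d_0, \ldots, d_n)$ recording $d_i := \deg q_i$; a short arithmetic check on the total degree formula forces $\sum d_i = d/p$ with $d_n \geq 1$ when $p \mid d$, or $\sum d_i = (d-1)/p$ with $d_n = 0$ when $p \mid d - 1$. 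For each admissible $\tau$, the stratum $U_{d,\tau}$ is the image of an affine parameter space $V_\tau$ of dimension $3 + n + \sum d_i$ (counting $c$, the coefficients of the $q_i$, and $a$), and this parametrization is injective by uniqueness of both the critical point and the continued fraction expansion.

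Maximizing $\dim V_\tau$ over admissible patterns is elementary. When $p \mid d$, the unique maximizer $\tau^\star$ has $n = d/p$, $d_0 = 0$, and $d_i = 1$ for $i \geq 1$, giving $\dim V_{\tau^\star} = 3 + 2d/p$. When $p \mid d - 1$, the unique maximizer has $n = (d-1)/p + 1$, $d_0 = d_n = 0$, and $d_i = 1$ for $1 \leq i \leq n - 1$, giving $\dim V_{\tau^\star} = 4 + 2(d-1)/p$. In each case $\dim V_{\tau^\star}$ strictly exceeds $\dim V_\tau$ for every other admissible $\tau$.

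It remains to show that $\overline{U_{d,\tau^\star}} = U_d$, with closure taken inside $\Rat_d$. Granted this, $U_d$ is irreducible because $V_{\tau^\star}$ is, and rational because the injective parametrization $V_{\tau^\star} \to U_{d,\tau^\star}$ is birational onto a dense open subset, while $V_{\tau^\star}$ itself is a Zariski open subset of an affine space. This density step is the technical heart of the argument. My plan is to prove it by constructing, for each non-maximal stratum $U_{d,\tau}$ and each $\phi \in U_{d,\tau}$, a one-parameter family $\{\phi_t\} \subset \Rat_d$ with $\phi_0 = \phi$ and $\phi_t \in U_{d,\tau^\star}$ for $t \neq 0$. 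The natural source of such families is a continued-fraction refinement identity rewriting a polynomial entry $q_i(z^p)$ of degree $\geq 2p$ as a limit of a deeper continued fraction all of whose entries have degree $p$. The main obstacle, and the most delicate point of the theorem, is engineering such families to preserve both the global degree $d$ (keeping the family inside $\Rat_d$) and the unicriticality of $\phi_t$ (keeping it inside $U_d$) throughout.
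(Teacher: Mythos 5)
Your setup---stratifying by the continued-fraction signature, parametrizing each stratum by the critical point together with the coefficient data $(q_0,\ldots,q_n,a)$, and computing that the maximal stratum has the advertised dimension while all others are strictly smaller---matches the paper's proof and is correct as far as it goes. But the argument stops exactly where the theorem becomes nontrivial: you state as a ``plan'' that every non-maximal stratum lies in the closure of the maximal one, propose to prove it by explicit one-parameter degenerations built from continued-fraction refinement identities, and then concede that making such families preserve both the degree and unicriticality is ``the main obstacle.'' That step is not a routine verification to be deferred; without it you have not ruled out that $U_d$ has an irreducible component supported entirely on low-dimensional strata, so neither irreducibility nor the dimension formula is established. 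As written, the proposal proves only that $U_d$ has \emph{a} component of the claimed dimension which is rational.

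The paper closes this gap without constructing any degenerations, by combining two statements. First, Lemma~\ref{Lem: Smaller space} shows that every $\phi \in \uni$ lies in the linear subspace $L_d \subset \Rat_d$ defined by $a_i = b_i = 0$ for $i \not\equiv 0,1 \pmod p$, and that membership in $\uni$ is equivalent to $f_2g_1 - f_1g_2$ being a nonzero constant; since this imposes at most $2d/p - 1$ equations on an open subset of $L_d$, \emph{every} irreducible component of $\uni$ has dimension at least $2 + 2d/p$ (Lemma~\ref{Lem: Dimension bound}). Second, Proposition~\ref{Prop: Zariski open} shows the maximal stratum $\uni(\kappa^\circ)$ is Zariski \emph{open} in $\uni$ (the conditions $\deg q_0 = 0$, $\deg q_i = 1$ amount to non-vanishing of leading coefficients arising in the division algorithm). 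Writing $\uni = \overline{\uni(\kappa^\circ)} \cup \bigl[\uni \smallsetminus \uni(\kappa^\circ)\bigr]$, the second (closed) piece is a finite union of images of varieties of dimension $< 2 + 2d/p$, so no component of $\uni$ can lie inside it; hence every component lies in $\overline{\uni(\kappa^\circ)}$, which is irreducible. If you want to salvage your route, you would either need to actually construct the degenerating families (delicate, as you note) or, more efficiently, adopt the component-wise dimension lower bound: it is the one idea your proposal is missing, and it converts your ``plan'' into a proof.
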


	Over an arbitrary field, one can use the continued fraction expansion of a rational function to stratify the space $\Rat_d$. More precisely, fix a tuple of nonnegative integers $\kappa = (\kappa_0, \ldots, \kappa_n)$, and define $\Rat_d(\kappa)$ to be the space of rational functions $\phi$ with continued fraction expansion of the form $[f_0(z), \ldots, f_n(z)]$, where $\kappa_0 = \max\{ \deg(f_0), 0\}$ and $\kappa_i = \deg(f_i)$ for $i > 0$. Then $\Rat_d$ is the union of finitely many strata $\Rat_d(\kappa)$. This observation will be useful in \S\ref{Sec: Unicritical Geometry} for proving Theorem~\ref{Thm: Unicritical Geometry}. 
	
	Unfortunately, the continued fraction expansion does not behave well under composition of rational functions, and so this stratification does not descend to the space of dynamical systems $\mathrm{M}_d = \Rat_d / \SL_2$, which is constructed as a quotient by the action of $\SL_2$ on $\Rat_d$ by conjugation. See \cite{Milnor_Rational_Maps_1993, Silverman_Rational_Maps_1998, Levy_M_d_rational_2011} for more details and references on the topology and geometry of the space $\mathrm{M}_d$. Nevertheless, the unicritical locus $U_d$ is conjugation invariant, and so the space of unicritical conjugacy classes $\UU_d = U_d / \SL_2  \subset \mathrm{M}_d$ is well defined. Evidently $\UU_d$ is unirational, but the techniques used to prove Theorem~\ref{Thm: Unicritical Geometry} imply more.
	
\begin{thm}
\label{Thm: Unicritical Conjugacy}
	Fix a prime field $\FF_p$ and an integer $d > 1$ satisfying $d \equiv 0$ or $1 \pmod{p}$. 
	The space of unicritical conjugacy classes $\UU_d$ is an irreducible rational variety over $\FF_p$.
\end{thm}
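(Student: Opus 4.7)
The plan is to reduce $\UU_d = U_d/\SL_2$ to a quotient by a Borel subgroup and then construct an explicit rational slice using the continued-fraction coordinates provided by Theorem~\ref{Thm: No Finite Critical}.

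First, move the unique critical point to $\infty$: by the corollary to Theorem~\ref{Thm: No Finite Critical}, every unicritical $\phi$ is $\SL_2$-conjugate to a function in the sublocus $U_d^\infty \subset U_d$ whose critical point is $\infty$. Since $\SL_2$ acts transitively on $\PP^1$ with stabilizer of $\infty$ equal to the Borel subgroup $B = \left\{\left(\begin{smallmatrix}\alpha & \beta \\ 0 & \alpha^{-1}\end{smallmatrix}\right)\right\}$, the space $U_d$ is the associated bundle $\SL_2 \times^B U_d^\infty$, so $\UU_d = U_d^\infty / B$ as $\FF_p$-varieties.

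Next, I would make the $B$-action on $U_d^\infty$ explicit in CF coordinates. Decompose $B = \Gm \ltimes \mathbb{G}_a$ into the scaling $z \mapsto \alpha^2 z$ and the translation $z \mapsto z + \mu$, and apply each to a generic $\phi \in U_d^\infty$ written as $\phi = [q_0(z^p), q_1(z^p), \ldots, q_n(z^p) + az]$. Using the Frobenius identity $(z+\mu)^p = z^p + \mu^p$, translation transforms each $q_i$ by the rule $q_i(T) \mapsto q_i(T + \mu^p) + (\text{constant correction})$, the correction being $-\mu$ for $i = 0$, $+a\mu$ for $i = n$, and zero otherwise. Scaling sends $q_i(T) \mapsto \alpha^{\pm 2} q_i(\alpha^{2p} T)$ with sign alternating in $i$, and rescales $a$ by a character of $\Gm$. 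In particular, both actions preserve the CF stratification and restrict to an algebraic $\FF_p$-action on the affine space of CF coefficients of the dense stratum $V \subset U_d^\infty$.

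To extract the slice, identify some $q_i$ appearing in the generic CF whose degree $d_i$ is not divisible by $p$. The subleading coefficient of $q_i(T + \mu^p)$ equals $d_i c_{d_i} \mu^p + c_{d_i - 1} + (\text{higher in } \mu^p)$, so setting it to zero determines $\mu^p$ rationally in terms of the remaining coordinates, hence determines $\mu$ rationally over $\FF_p$ (as $\mu^p$ is an $\FF_p$-rational coordinate). The scaling parameter then acts on the leading coefficient of $q_i$ through the nontrivial character $\alpha \mapsto \alpha^{\pm 2 + 2p d_i}$, allowing a rational normalization to $1$. The slice $S \subset V$ cut out by these two conditions is an open subvariety of an $\FF_p$-rational affine space, and by construction the multiplication map $B \times S \to V$ is birational, so $\UU_d$ is birationally equivalent over $\FF_p$ to the rational variety $S$.

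The main obstacle will be to verify, in each of the two cases $p \mid d$ and $p \mid d - 1$, that the dense stratum admits some $q_i$ of degree coprime to $p$ (or, failing that, some lower coefficient of $q_i$ whose translation shift is linear and nonzero in $\mu^p$); this follows from the degree-shape constraints behind Theorem~\ref{Thm: Unicritical Geometry} but requires explicit case-by-case bookkeeping. A subtler issue is that the effective $B$-action factors through the Frobenius on one-parameter subgroups ($\mu \mapsto \mu^p$ and $\alpha \mapsto \alpha^{2p}$); the purely inseparable quotient has the same orbits as $B$ on $\alg{\FF_p}$-points, but one must confirm that the normalization conditions can be solved in the $\FF_p$-rational function field of $V$ rather than in an inseparable extension.
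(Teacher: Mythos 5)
Your overall strategy --- reduce to the stabilizer $B$ of $\infty$ and exhibit an explicit rational slice for the $B$-action on a dense continued-fraction stratum of $U_d^{(\infty)}$ --- is in the same spirit as the paper's proof, but the two normalizations you choose do not produce a birational slice, and the issue you defer to the end is in fact the fatal one rather than a technicality. First, the unipotent part: killing the subleading coefficient of a $q_i$ with $p\nmid\deg q_i$ determines $\mu^p$, not $\mu$, as a rational function of the coordinates. The resulting multiplication map $\mathbb{G}_a\times S\to V$ is then purely inseparable of degree $p$ (a bijection on $\alg{\FF_p}$-points but not birational), so $k(S)$ is a purely inseparable extension of $k(V)^{\mathbb{G}_a}$ and rationality of $S$ does not transfer to the quotient. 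Note also that your claim that the effective $B$-action ``factors through Frobenius'' is false precisely because of the correction terms you yourself record: the $-\mu$ on $q_0$ and the $+a\mu$ on $q_n$ are linear in $\mu$, and these are what you must use. Normalizing the constant term of $q_0$ to zero (equivalently, $\phi(\infty)=0$) solves for $\mu$ itself, rationally. Second, the torus part: a coordinate transforming through the character $\alpha\mapsto\alpha^{\pm2+2pd_i}$ cannot be ``rationally normalized to $1$'' --- setting a weight-$m$ coordinate equal to $1$ yields a slice for which the multiplication map has degree $|m|$, and here $m$ is always even (indeed $\equiv\pm2\pmod{2p}$), never $\pm1$, even after accounting for the trivial action of $-I$. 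So $B\times S\to V$ fails to be birational in the torus direction as well.

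Both defects are repaired by normalizing with the \emph{values} of $\phi$ along the critical orbit rather than with continued-fraction coefficients: $\phi(\infty)$ shifts by $-\mu$ under translation, and once $\phi(\infty)=0$ the value $\phi(0)$ transforms with weight exactly $-1$ under the effective torus $z\mapsto\lambda z$. This is what the paper does: it takes the slice $Y=\{\phi\in U_d^{(\infty)}(\kappa^\circ):\phi(\infty)=0,\ \phi(0)=1\}$ (i.e., conjugates the critical point, critical value, and second image to $\infty,0,1$), checks that $Y\to\UU_d$ is dominant with trivial stabilizer via three-point transitivity of $\PGL_2$, and then parametrizes $Y$ rationally in the continued-fraction coordinates, the condition $\phi(0)=1$ eliminating $q_1(0)$ as an explicit rational function of the remaining data. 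If you replace your two normalization conditions by these, the rest of your outline (the explicit $B$-action on the dense stratum, dominance, and injectivity on geometric points) goes through and essentially reproduces the paper's argument.
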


	Consider the following classical question in algebraic geometry: 
	
\begin{question*} Given distinct points $P_1, \ldots, P_n \in \PP^1$ and integers $d, e_1, \ldots, e_n > 1$ satisfying $e_i \leq d$ and $\sum (e_i - 1) \leq 2d -2$, how many classes of separable degree-$d$ rational functions are there that ramify to order exactly $e_i$ at $P_i$ and are unramified elsewhere, modulo postcomposition by automorphisms of $\PP^1$? 
\end{question*}

% Over the complex numbers, the Hurwitz formula implies that $\sum (e_i - 1) = 2d-2$. 
Each ramification condition corresponds to a special Schubert cycle in the Grassman variety of $2$-planes in $(d+1)$-space, and so the question may be rephrased as asking for the size of their intersection. Over the complex numbers, the formulas of Hurwitz and Pieri imply that such rational functions exist if and only if $\sum (e_i - 1) = 2d-2$.
% , then Pieri's formula shows that this intersection is positive; i.e., such classes always exist. 
Eisenbud and Harris proved that these Schubert cycles intersect properly, and hence the number of postcomposition classes is finite \cite{Eisenbud-Harris_Divisors_General_Curves_1983}. Assuming the points $P_i$ are in general position, explicit formulas for the number of classes were given by Goldberg for simple ramification \cite{Goldberg_Catalan_1991}  and Scherbak in general~\cite{Scherbak_Critical_Points_2002}. 

	In characteristic~$p$, the question is more subtle, even after taking into account wild ramification. For example, the set of postcomposition classes need not be finite, in which case one should instead ask for its dimension. Osserman used degeneration techniques to give a complete answer when $p > d$ or $p < e_i$ for all $i$ \cite{Osserman_Ramification_char_p_2006} and when the ramification is not too wild \cite[Thm.~1.3]{Osserman_Deformations_2005}. We are able to contribute a few new cases:
	
\begin{thm}
\label{Thm: Classes}
	Fix $d, e > 1$ such that $d \equiv 0 \text{ or } 1 \pmod{p}$ and $p \mid e \leq d$, and fix a point $c \in \PP^1(F)$. Let $U_{d,e}^{(c)}$ be the space of  rational functions of degree~$d$ that are ramified at $c$ with index $e$ and unramified elsewhere. Then
	\[
		\dim\left(\SL_2 \backslash U_{d,e}^{(c)} \right) = \begin{cases} 
				(2d - e) / p & \text{if } p \mid d \\
				1 + (2d - 2 - e) / p & \text{if } p \mid (d-1).
			\end{cases}
	\]
In particular, $U_{d,e}^{(c)}$ is nonempty under these hypotheses. 
\end{thm}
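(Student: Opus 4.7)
The approach is to reduce to $c = \infty$ and then to stratify via the continued fraction expansion provided by Theorem~\ref{Thm: No Finite Critical}. Precomposing by a Möbius transformation that takes $c$ to $\infty$ gives an $\SL_2$-equivariant isomorphism $U_{d,e}^{(c)} \simeq U_{d,e}^{(\infty)}$, so we may assume $c = \infty$. Since the $\SL_2$-action by postcomposition is free on nonconstant rational maps, it will suffice to compute $\dim U_{d,e}^{(\infty)}$ and subtract $3$. To reduce further, let $V \subset U_{d,e}^{(\infty)}$ denote the locally closed subset of $\phi$ with $\phi(\infty) = \infty$. Postcomposition by $\SL_2$ is transitive on the value $\phi(\infty) \in \PP^1$, and the stabilizer of $\infty$ is the $2$-dimensional Borel fixing $\infty$, so $\dim U_{d,e}^{(\infty)} = \dim V + 1$. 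Everything reduces to computing $\dim V$.

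By Theorem~\ref{Thm: No Finite Critical}, the members of $V$ are precisely the functions $\phi(z) = [q_0(z^p), q_1(z^p), \ldots, q_n(z^p) + az]$ with $a \in F^\times$ and $q_0$ nonconstant (the latter being exactly the condition $\phi(\infty) = \infty$). A direct computation of the numerator and denominator of $\phi$ via the standard recursion for continued-fraction convergents shows that the pole order of $\phi$ at $\infty$ equals $p \deg q_0$; since this pole order is the ramification index at $\infty$, we obtain $\deg q_0 = e/p$. I then stratify $V$ by the tuple $\kappa = (d_0, \ldots, d_n)$ with $d_i = \deg q_i$, imposing $d_0 = e/p$, $d_i \geq 1$ for $1 \leq i \leq n-1$, and either $d_n \geq 1$ with $\sum d_i = d/p$ (when $p \mid d$) or $d_n = 0$ with $\sum d_i = (d-1)/p$ (when $p \mid d-1$). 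Each stratum is parameterized by the coefficients of the $q_i$'s together with $a \in F^\times$, realizing it as an open subset of an affine space of dimension $n + \sum d_i + 2$.

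The final step is to identify the dominant stratum. Because $\sum d_i$ is determined by the residue of $d$ modulo $p$, the quantity $n + \sum d_i + 2$ grows linearly in $n$ and is maximized by taking $d_i = 1$ for all $1 \leq i \leq n-1$ (and $d_n = 1$ in the first case). This gives $n = (d-e)/p$ when $p \mid d$ and $n = 1 + (d-1-e)/p$ when $p \mid d-1$, hence $\dim V = (2d-e)/p + 2$ or $(2d-2-e)/p + 3$ respectively. Subtracting $2$ produces the two formulas of the theorem, and nonemptiness of $U_{d,e}^{(c)}$ is witnessed by any point in this maximal stratum. The only mildly nonroutine pieces are the pole-order calculation at $\infty$ (a straightforward induction on the recursion $P_k = f_k P_{k-1} + P_{k-2}$, $Q_k = f_k Q_{k-1} + Q_{k-2}$) and the bookkeeping required to keep the two residue cases cleanly separated.
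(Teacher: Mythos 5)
Your argument is correct and is essentially the paper's: both stratify by the continued-fraction signature coming from Theorem~\ref{Thm: No Finite Critical}, parameterize each stratum by the coefficients of the $q_i$ and the constant $a$, observe that the dimension $n+\sum d_i+2$ is maximized by taking as many linear blocks as possible, and then subtract $\dim\SL_2=3$ using that postcomposition acts with finite stabilizers. The only cosmetic difference is that you slice along the locus $\{\phi(\infty)=\infty\}$, where the index appears as $e=p\deg q_0$, and add $1$ for the orbit of the value $\phi(\infty)$, whereas the paper works directly with the generic locus $\phi(\infty)\neq\infty$, where $q_0$ is constant and $e=p\deg q_1$; the two counts agree.
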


	The original motivation for this note came from some questions that arise in the theory of ramification loci for rational functions on the Berkovich projective line. Rather than delve into this technical subject here, we instead present an application of Theorem~\ref{Thm: No Finite Critical} that may be stated in a more classical context. (The interested reader may also look at the discussion of the locus of total ramification in \cite{Faber_Berk_RamI_2011}.) 
	
	Let $k$ be an algebraically closed field that is complete with respect to a nontrivial non-Archime\-dean absolute value $|\cdot|$. We assume further that $k$ has residue characteristic $p > 0$. For example, $k$ could be the completion of an algebraic closure of the field of $p$-adic numbers $\QQ_p$ (denoted $\CC_p$) or of the field of formal Laurent series $\FF_p(\!(t)\!)$. Write $\OO_k = \{x \in k : |x| \leq 1\}$ and $\mm = \{x \in \OO_k : |x| < 1\}$ for the valuation ring of $k$ and its maximal ideal, respectively, and let $\tilde k = \OO_k / \mm$ be its residue field. Writing $\PP^1(k) = k \cup \{\infty\}$ (and similarly for $\PP^1(\tilde k)$), there is a canonical reduction map $\mathrm{red}: \PP^1(k) \to \PP^1(\tilde k)$ given by $\mathrm{red}(x) =x \pmod{\mm}$ if $x \in \OO_k$, and $\mathrm{red}(x) = \infty$ otherwise. 
	
\begin{cor}
\label{Cor: Application}
	Let $\varphi \in k(z)$ be a nonconstant rational function with good reduction such that all of the critical points of $\varphi$ have the same image in $\PP^1(\tilde k)$.  Then 
	\[
		\deg(\varphi) \equiv 0 \text{ or } 1 \pmod p.
	\] 
\end{cor}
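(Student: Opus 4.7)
The plan is to reduce $\varphi$ modulo $\mm$ and apply Corollary~\ref{Cor: Congruence} to the reduction. Since $k$ is algebraically closed, so is $\tilde k$, and $\tilde k$ has characteristic $p$. Good reduction supplies a well-defined $\tilde\varphi \in \tilde k(z)$ with $\deg(\tilde\varphi) = \deg(\varphi) = d$. The conclusion follows once I show that $\tilde\varphi$ is either inseparable or unicritical: in the inseparable case $\tilde\varphi(z) = \psi(z^p)$ for some $\psi \in \tilde k(z)$, forcing $p \mid d$; in the unicritical case Corollary~\ref{Cor: Congruence} applied to $\tilde\varphi$ over $\tilde k$ yields $d \equiv 0$ or $1 \pmod p$ directly.

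So assume $\tilde\varphi$ is separable. The heart of the argument is the claim that every critical point of $\tilde\varphi$ on $\PP^1(\tilde k)$ is the reduction of some critical point of $\varphi$ on $\PP^1(k)$. Granting this, the hypothesis forces the critical locus of $\tilde\varphi$ to be contained in $\{\tilde c\}$. For $d = 1$ the conclusion is trivial; for $d \geq 2$, a separable rational function on $\PP^1_{\tilde k}$ must have a critical point by Riemann--Hurwitz, so the critical locus is exactly $\{\tilde c\}$ and $\tilde\varphi$ is unicritical.

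To establish the specialization claim, I would work in good-reduction coordinates: write $\varphi = f/g$ with $f, g \in \OO_k[z]$ of Gauss norm $1$ and with reductions $\tilde f, \tilde g \in \tilde k[z]$ coprime and $\max(\deg \tilde f, \deg \tilde g) = d$. The finite critical points of $\varphi$ (with multiplicity) are then the zeros of $h := f'g - fg' \in \OO_k[z]$, while the finite critical points of $\tilde\varphi$ are the zeros of $\tilde h = \tilde f'\tilde g - \tilde f \tilde g'$, which is nonzero by separability of $\tilde\varphi$. A Newton-polygon / Hensel-type argument over the complete algebraically closed field $k$ then shows that every zero of $\tilde h$ in $\tilde k$ lifts, with the correct multiplicity, to a zero of $h$ in $\OO_k$. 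A critical point of $\tilde\varphi$ at $\infty$ is handled by the parallel argument after the coordinate change $z \mapsto 1/z$, which again has good reduction.

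The main obstacle is this specialization step: one must verify that $h \in \OO_k[z]$ has $\tilde h \neq 0$ (supplied by separability of $\tilde\varphi$), and that the non-Archimedean factorization of $h$ is compatible with reduction in such a way that each root of $\tilde h$ in $\tilde k$ is genuinely the reduction of a root of $h$ in $\OO_k$ --- even though the leading coefficient of $h$ can have positive valuation and the Newton polygon may carry some roots of $h$ off to $\infty$. These are standard facts about polynomials over a complete valued field, but their careful invocation is where the real content of the proof lies.
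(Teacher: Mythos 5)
Your proposal is correct, and its skeleton matches the paper's: pass to the reduction $\tilde\varphi$, split on whether $\tilde\varphi$ is inseparable, show $\tilde\varphi$ is unicritical in the separable case, and invoke Corollary~\ref{Cor: Congruence}. The one genuine divergence is the direction of the specialization step. You lift critical points \emph{up}: every critical point of $\tilde\varphi$ is the reduction of a critical point of $\varphi$, which requires the Newton-polygon/root-lifting lemma you correctly flag as the technical crux (and which is indeed delicate, since the Wronskian $h=f'g-fg'$ need not have unit leading coefficient, so some of its roots can escape $\OO_k$). The paper instead pushes critical points \emph{down}, and sidesteps that lemma entirely by a normalization: after conjugating by an element of $\PGL_2(\OO_k)$ so that the common reduction $\tilde c$ equals $0$, every critical point $c_i$ of $\varphi$ lies in $\mm\subset\OO_k$, so the factorization $h=a\prod(z-c_i)$ over the algebraically closed field $k$ reduces coefficientwise to $\tilde f'\tilde g-\tilde f\tilde g'=\tilde a\,z^{2d-2}$, with $\tilde a\neq 0$ forced by separability of $\tilde\varphi$. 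That single line replaces your lifting argument, and it also supplies for free the nonemptiness of the critical locus of $\tilde\varphi$ that you obtain from Riemann--Hurwitz. Your route works and is arguably more robust (it needs no normalization of the critical point), but if you carry it out you must actually prove the lifting statement --- the standard Weierstrass/Newton-polygon factorization of $h$ over the complete algebraically closed field $k$ into the part with roots in $\OO_k$ and the part with roots of absolute value greater than $1$ --- rather than cite it; as written, that is the one unproved step, though not a gap in the sense of a step that would fail.
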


See \S\ref{Sec: Application} for the definition of good reduction and for the proof of the corollary.  In \S\ref{Sec: Background}, we prove Theorem~\ref{Thm: No Finite Critical}. We endow $U_d$ with the structure of a quasi-projective variety in \S\ref{Sec: U_d definition}. In \S\ref{Sec: Preliminaries} we set up the framework for proving Theorem~\ref{Thm: Unicritical Geometry},~\ref{Thm: Unicritical Conjugacy}, and~\ref{Thm: Classes}, which is then accomplished in \S\ref{Sec: Proofs}. We illustrate these techniques with the special case of unicritical functions of degree~$p$ in \S\ref{Sec: Example}.   \\

\noindent \textbf{Acknowledgments.} The author was supported by a National Science Foundation Postdoctoral Research Fellowship during this work. He would like to thank Joseph Silverman and Alon Levy for the discussions that led to the statement and proof of Theorem~\ref{Thm: Unicritical Conjugacy}.

%%%%%%%%%%%%%%%%%%%
%%%%%%%%%%%%%%%%%%%

\section{Background and Proof of the Characterization}
\label{Sec: Background}

    Let $F$ be an algebraically closed field, and let $\phi \in F(z)$ be a rational function. For $x \in \PP^1(F)$, choose a fractional linear transformation  $\sigma$ with coefficients in $F$ such that $\sigma(\phi(x)) \neq \infty$. If $x \neq \infty$, we say that $x$ is a \textbf{finite critical point} of $\phi$ if $\frac{d(\sigma \circ \phi)}{dz} (x) = 0$. If $x = \infty$, we say that it is an \textbf{infinite critical point} if $\frac{d(\sigma \circ \phi(1/z))}{dz}\Big|_{z=0} = 0$. This definition does not depend on the choice of $\sigma$.
% by the chain rule for derivatives. 
(Note that $x$ is a critical point of $\phi$ if and only if the induced linear map $\phi_*$ on tangent spaces vanishes at $x$.)  The set of \textbf{finite critical points} is denoted $\Crit{\phi}$.

    If $F$ is a field of positive characteristic $p$, a rational function  $\phi$ with coefficients in $F$ is called \textbf{inseparable} if $\phi(z) = \psi(z^p)$ for some rational function $\psi \in F[z]$. When $\phi$ is nonconstant, inseparability is equivalent to saying that the extension of function fields $F(z) / F(\phi(z) )$ is not separable in the sense of field theory.

\begin{comment}
\begin{lem}
\label{Lem: Kernel}
    Let $F$ be a field of positive characteristic $p$, and let $D: F[z] \to F[z]$ be the formal derivative map. Then the kernel of $D$ is precisely the $F$-subalgebra of inseparable polynomials $F[z^p]$.
\end{lem}

\begin{remark}
    In fact, the kernel of the formal derivative on the field of rational functions $F(z)$ is precisely the subfield of \textit{inseparable rational functions} $F(z^p)$, but we have no need for this stronger statement.  \xhelp{Actually, you will need this later!}
   \end{remark}

\begin{proof}
    Let $f \in F[z]$, and write 
      \[
        f(z) = \sum a_j z^j = \sum_{j: \ p \mid j} a_j (z^p)^{j/p} + \sum_{j: \ p \nmid j} a_j z^j = f_1(z^p) + f_2(z).
      \]
The chain rule shows $D(f_1(z^p) ) = 0$, so that $D(f) = D(f_2)$. Since $p$ does not divide the exponent of any monomial of $f_2$, we see that $D(f) = 0$ if and only if $f_2 = 0$, which is tantamount to the statement of the lemma. 
\end{proof}
\end{comment}

\begin{lem}
\label{Lem: Kernel}
    Let $F$ be a field of positive characteristic $p$, and let $D: F(z) \to F(z)$ be the formal derivative map. Then the kernel of $D$ is precisely the subfield of inseparable rational functions $F(z^p)$.
\end{lem}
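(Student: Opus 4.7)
The plan is to prove the two inclusions of $\ker(D) = F(z^p)$ separately. For the inclusion $F(z^p) \subseteq \ker(D)$, write any element of $F(z^p)$ as $\psi(z^p)$ for some $\psi \in F(z)$ and apply the chain rule to get $D(\psi(z^p)) = \psi'(z^p) \cdot p z^{p-1} = 0$ in characteristic $p$.

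For the reverse inclusion, the strategy is to reduce from rational functions to polynomials via coprimality. Suppose $\phi \in F(z)$ satisfies $D(\phi) = 0$, and write $\phi = f/g$ in lowest terms with $f, g \in F[z]$ coprime. The quotient rule gives $(f'g - fg')/g^2 = 0$, hence $f'g = fg'$. Since $f$ divides $fg' = f'g$ and is coprime to $g$, we conclude $f \mid f'$; but $\deg(f') < \deg(f)$ whenever $f' \neq 0$, so this forces $f' = 0$. A symmetric argument yields $g' = 0$.

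It then suffices to verify that the kernel of $D$ on $F[z]$ is exactly $F[z^p]$. For this, write $f = \sum a_j z^j$ and compute $f' = \sum j a_j z^{j-1}$; vanishing is equivalent to $j a_j = 0$ for all $j \geq 1$, which in characteristic $p$ amounts to $a_j = 0$ whenever $p \nmid j$. Hence $f \in F[z^p]$, and similarly $g \in F[z^p]$, so $\phi = f/g \in F(z^p)$. No step here is a serious obstacle; the only subtle point worth flagging is the necessity of putting $\phi$ in lowest terms before applying the quotient rule, as coprimality is what turns the divisibility relation $f \mid f'$ into $f' = 0$ via degree comparison.
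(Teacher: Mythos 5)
Your proof is correct and takes essentially the same route as the paper's: both reduce to the polynomial case via the quotient rule applied to a lowest-terms representation $f/g$, exploit coprimality together with the degree drop of differentiation, and then settle the polynomial case by inspecting which monomials survive. The only cosmetic difference is that you invoke Euclid's lemma to conclude $f \mid f'$ and $g \mid g'$ directly, whereas the paper rewrites the identity as $f/g = D(f)/D(g)$ and contradicts the minimality of the reduced representation; both arguments hinge on exactly the same degree comparison.
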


\begin{remark}
	The lemma is an immediate consequence of the Hurwitz formula; however, for the reader's convenience we give an elementary proof. 
\end{remark}

\begin{proof}
    We begin by treating the polynomial case. Let $f \in F[z]$, and write 
      \[
        f(z) = \sum a_j z^j = \sum_{j: \ p \mid j} a_j (z^p)^{j/p} + \sum_{j: \ p \nmid j} a_j z^j = f_1(z^p) + f_2(z).
      \]
Then $D(f_1(z^p) ) = 0$, so that $D(f) = D(f_2)$. Since $p$ does not divide the exponent of any monomial of $f_2$, we see that $D(f) = 0$ if and only if $f_2 = 0$. 

	Returning to the general case, it is evident that the field of inseparable rational functions lies in the kernel of $D$. So let us suppose that $\phi \in \ker(D)$ is arbitrary, and write $\phi = f / g$ with $f$ and $g$ sharing no common linear factor. We may assume that $f$ is nonzero. The quotient rule shows that
	\begin{equation}
	\label{Eq: Anti-Leibniz}
		D(f)g = fD(g).
	\end{equation}
If $D(g) = 0$, then $D(f)$ must also vanish, and the previous paragraph allows us to write $f(z) = f_1(z^p)$ and $g(z) = g_1(z^p)$ for some polynomials $f_1, g_1$. Hence $\phi(z) = (f_1 / g_1)(z^p)$ as desired. So let us suppose that $D(g)$ is not identically zero. Then~\eqref{Eq: Anti-Leibniz} implies $f / g = D(f) / D(g)$, which contradicts the fact that $D(f)$ and $D(g)$ have strictly smaller degree than $f$ and $g$, respectively.
\end{proof}

\begin{lem}
\label{Lem: Reciprocal}
    Let $\phi \in F(z)$ be a rational function. Then the set of finite critical points of $\phi$ and $1 / \phi$ agree. In symbols, $\Crit{\phi} = \Crit{1 / \phi}$. 
\end{lem}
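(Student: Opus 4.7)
The plan is to reduce the lemma to the post-composition invariance that is already built into the definition of a finite critical point. The key observation is that $1/\phi = \rho \circ \phi$, where $\rho(w) = 1/w$ is a fractional linear transformation of $\PP^1(F)$. Recall that for any rational function $\psi$ and any $x \in F$, the definition declares $x \in \Crit{\psi}$ when $(\sigma \circ \psi)'(x) = 0$ for some (equivalently, any) fractional linear $\sigma$ satisfying $\sigma(\psi(x)) \ne \infty$; this independence of $\sigma$ is precisely the remark immediately following the definition.

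With this in hand, I would fix $x \in F$ and an arbitrary fractional linear $\tau$ with $\tau((1/\phi)(x)) \ne \infty$. Setting $\sigma := \tau \circ \rho$, I obtain another fractional linear transformation satisfying $\sigma(\phi(x)) = \tau((1/\phi)(x)) \ne \infty$, together with the identity $\sigma \circ \phi = \tau \circ (1/\phi)$. Consequently the derivatives $(\sigma \circ \phi)'(x)$ and $(\tau \circ (1/\phi))'(x)$ are literally the same expression, so a valid test for $x \in \Crit{1/\phi}$ doubles as a valid test for $x \in \Crit{\phi}$. This yields $\Crit{1/\phi} \subseteq \Crit{\phi}$; applying the same reasoning to $\phi = 1/(1/\phi)$ gives the reverse inclusion.

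There is no genuine obstacle; the whole argument is the chain rule packaged inside the post-composition invariance of the critical locus. A less slick alternative would be to use $(1/\phi)' = -\phi'/\phi^{2}$ on the open set where $\phi$ has neither a zero nor a pole, and then handle the two cases $\phi(x) = 0$ and $\phi(x) = \infty$ separately by choosing $\sigma = \mathrm{id}$ on one side and $\sigma = \rho$ on the other. I would only fall back on this if there were some reason to distrust the post-composition invariance, which in this paper is taken as immediate.
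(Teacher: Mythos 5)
Your proof is correct and is essentially the paper's own argument: the paper likewise sets $\sigma_2 = \sigma_1 \circ \rho$ with $\rho(w)=1/w$, notes $\sigma_2 \circ (1/\phi) = \sigma_1 \circ \phi$, and concludes from the independence of the choice of test transformation. No substantive difference.
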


\begin{proof}
    Let $x \neq \infty$, and choose a fractional linear transformation $\sigma_1$ with coefficients in $F$ such that $\sigma_1(\phi(x)) \neq 0, \infty$. Choose polynomials $f, g \in F[z]$ with no common root such that $\sigma_1 \circ \phi = f / g$. If we set $\sigma_2(z) = \sigma_1(1/z)$, then $\sigma_2 \circ (1 / \phi) = \sigma_1 \circ \phi$. It follows that $x$ is a finite critical point for $\phi$ if and only if it is a finite critical point for $1 / \phi$. 
\end{proof}

    The following example illustrates the theorem for polynomial functions, and it will form the base case for the inductive proof of the theorem.

\begin{example}[Polynomials]
\label{Ex: Polynomials}
    Let $\phi \in F[z]$ be a polynomial with no finite critical point. Then $\phi'(z)$ cannot have any zeros, which is to say that $\phi' = a \in F \smallsetminus \{0\}$. The kernel of the derivative map on $F[z]$ is precisely the $F$-subalgebra $F[z^p]$ . % (Lemma~\ref{Lem: Kernel} below). 
Hence $\phi(z) = q_0(z^p) + az$ for some polynomial $q_0$. 
\end{example}

\begin{proof}[Proof of Theorem~\ref{Thm: No Finite Critical}]
	First we show that any rational function of the form~\eqref{Eq: Special Form} has no finite critical point. If $n = 0$, then $\phi(z) = q_0(z^p) + az$. So $\phi' = a \neq 0$ and the result follows. Now fix $\ell \geq 0$ and suppose that a rational function of the form \eqref{Eq: Special Form} with $n = \ell$ has no finite critical point. We will deduce the desired statement if $\phi(z) = [q_0(z^p), q_1(z^p), \ldots, q_{\ell + 1}(z^p)+ az]$. Indeed, observe that the rational function
	\benn
		 \frac{1}{\phi(z) - q_0(z^p)} = [q_1(z^p), q_2(z^p), \ldots, q_{\ell+1}(z^p) + az]
	\eenn
has no finite critical point by the induction hypothesis. The critical points of $\phi$ are related by 
    \benn
        \ba
            \Crit{\phi} &= \Crit{\phi(z) - q_0(z^p)} \quad \text{(Lemma~\ref{Lem: Kernel})}, \\
                &= \Crit{\frac{1}{\phi(z) - q_0(z^p)}} \quad \text{(Lemma~\ref{Lem: Reciprocal})},
        \ea
    \eenn
and hence $\phi$ has no finite critical point. 

    The converse is a consequence of the following fact. 
 
\begin{obs*}
	Let $\psi = a / b \in F(z)$ be a rational function, where $a, b$ are polynomials with no common root and $\deg(b) \geq 1$. If $\psi$ has no finite critical point, then there exist polynomials $q_0, r \in F[z]$ with $0 \leq \deg(r) < \deg(b)$ such that $\psi(z) = q_0(z^p) + r(z) / b(z)$.\footnote{As with any good student of valuation theory, we subscribe to the convention that the degree of the zero polynomial is $- \infty$.}  
\end{obs*}

\begin{proof}[Proof of the Key Fact]
	The division algorithm gives polynomials $q, r \in F[z]$ such that $a = bq + r$ and $\deg(r) < \deg(b)$. Write $q(z) = q_0(z^p) + q_1(z)$, where $p$ does not divide the exponent of any monomial in $q_1$ (Example~\ref{Ex: Polynomials}). Then
	\[
		\psi' = q_1' + \frac{r'b - rb'}{b^2} = \frac{b^2q_1' + r'b - rb'}{b^2}. 
	\]
% If $b$ is a constant function, then $r = 0$, so that $\psi' = q_1'$. As $\psi$ is assumed to have no finite critical point, we conclude that $q_1' = c \neq 0$ for some $c \in F$. Hence $q_1(z) = cz$ as desired, and $\psi(z) = q_0(z^p) + cz + r(z) / b(z)$. 
Now $b$ is assumed to be nonconstant, and it cannot have a repeated root --- else $\psi$ would have this root as a finite critical point. Let us assume for the sake of a contradiction that $q_1 \neq 0$. Then
	\[
		\deg(r'b - rb') \leq \deg(b) + \deg(r) - 1 \leq 2\deg(b) - 2 < \deg(b^2q_1').
	\]
Since $\gcd(b, b^2q_1' + r'b - rb') = \gcd(b, rb') = 1$, the degree of the numerator of $\psi'$ agrees with that of $b^2 q_1'$. In particular, the numerator of $\psi'$ is nonconstant, and hence $\psi$ has a finite critical point. This contradiction shows $q_1 = 0$. If $r = 0$ too, then $a / b = q_0$ is an inseparable polynomial, which has infinitely many (finite) critical points. This contradiction completes the proof of the Key Fact. 
\end{proof}
 
We now return to the proof of the theorem, which will proceed by induction on the degree of the denominator of our rational function. Suppose that $\phi  \in F(z)$ is a rational function with no finite critical point whose denominator has degree~0; i.e., $\phi$ is a polynomial. Then Example~\ref{Ex: Polynomials} shows that $\phi(z) = q_0(z^p) + az = [q_0(z^p) + az]$ with $a \neq 0$, and we are finished. 

Next assume that the result holds for every rational function whose denominator has degree at most $\ell \geq 0$. Let $\phi = f / g$ be a rational function with no finite critical point. We assume that $f$ and $g$ have no common root, and that $\deg(g) = \ell+1$. By the Key Fact there exist polynomials $q_0, r$ such that $\phi(z) = q_0(z^p) + r(z) / g(z)$, where $0 \leq \deg(r) < \deg(g)$. Lemmas~\ref{Lem: Kernel} and~\ref{Lem: Reciprocal} show that
	\benn
		\Crit{\phi} = \Crit{\frac{r}{g}} = \Crit{\frac{g}{r}}.
	\eenn
Hence $g / r$ has no finite critical point. As the degree of $r$ is strictly less than $\ell+1$, we may apply the induction hypothesis to deduce that 
	\[
		\frac{g(z)}{r(z)}  = [q_1(z^p), q_2(z^p), \ldots, q_n(z^p) + az]
	\]
for some polynomials $q_1, \ldots, q_n$ and a nonzero constant $a$. But then
	\benn
		\ba
			\phi =  q_0(z^p) + \frac{1}{g(z) / r(z)} 
			&= q_0(z^p) + \frac{1}{ [q_1(z^p), q_2(z^p), \ldots, q_n(z^p) + az]} \\
			&= 
			[q_0(z^p), q_1(z^p), \ldots, q_n(z^p)+az],
		\ea
	\eenn
which completes the induction step. 
\end{proof}

%%%%%%%%%%%%%%%%%%%
%%%%%%%%%%%%%%%%%%%

\section{The Geometry of the Unicritical Locus}
\label{Sec: Unicritical Geometry}

	All varieties in this section will be defined over~$\FF_p$ for a fixed prime~$p$. 
	
\subsection{Definition of $U_d$}
\label{Sec: U_d definition}
	
	A rational function of degree $d \geq 1$ can be written as
		\begin{equation}
		\label{Eq: Coordinates}
			\phi(z) = \frac{f(z)}{g(z)} = \frac{a_dz^d + \cdots + a_0}{b_dz^d + \cdots + b_0},
		\end{equation}
where $f$ and $g$ are uniquely determined up to a common scalar multiple and $\Res(f, g) \neq 0$. Here $\Res(f,g)$ denotes the degree-$(d,d)$ resultant of $f$ and $g$; see, e.g., \cite[App.~A]{Milnor_Rational_Maps_1993} or \cite[\S2.4]{Silverman_Dynamics_Book_2007}. Following Milnor, we write $\Rat_d$ for the space of rational functions of degree~$d$. It is the Zariski open subvariety of $\PP^{2d+1}$ defined by 
	\begin{equation}
	\label{Eq: Rat_d}
		\Rat_d = \PP^{2d+1} \smallsetminus \{\Res(f, g) = 0\},
	\end{equation}
where we take $(a_d: \cdots : a_0 : b_d : \cdots : b_0)$ to be homogeneous coordinates on $\PP^{2d+1}$. We will abuse notation by writing $\phi$ for the corresponding point in $\Rat_d$. 
	
	This description of $\Rat_d$ works over an arbitrary field (or indeed over $\Spec \ZZ$ --- see \cite{Silverman_Rational_Maps_1998}), but we now look at two subloci that are special to fields of characteristic~$p$. First define $\Insep_d$ to be the subvariety of $\Rat_d$ corresponding to inseparable rational functions. Evidently it is given by an intersection of appropriate coordinate hyperplanes:
	\[
		\Insep_d = \{\phi \in \Rat_d : a_i = b_i = 0 \text{ whenever $p \nmid i$}\}.
	\]
Note that $\Insep_d$ is the empty variety if $p$ does not divide $d$.

	Next we want to define the locus of unicritical functions, denoted $U_d \subset \Rat_d$. As a set, it is clear what this should mean, but we must endow it with the structure of an algebraic variety. Intuitively, we fiber $\Rat_d$ according to the set of critical points of a rational function, and then define $U_d$ to be those fibers corresponding to a single critical point. We now execute this strategy, although it turns out to be more natural to fiber $\Rat_d$ using the polynomial defining the critical points instead. 
	
	The finite critical points of $\phi = f / g \in \Rat_d$ are the roots of the polynomial $f'(z)g(z) - f(z)g'(z) = \sum_{i = 0}^{2d-2} c_i z^i$. Each $c_i$ is a homogeneous quadratic polynomial in the coefficients $a_i, b_j$. This allows us to define a morphism $\omega: \Rat_d \smallsetminus \Insep_d \to \PP^{2d-2}$ by 
	\[
		\omega(\phi) = (c_{2d-2} : \cdots : c_0).
	\] 
Note that $\omega$ is well defined because $f'g - fg'$ vanishes precisely on the inseparable locus (Lemma~\ref{Lem: Kernel}). 

	Next define a morphism $\theta : \PP^1 \to \PP^{2d-2}$ by 
	\[
		\theta(s : t) = \left( \theta_{2d-2}(s,t) : \cdots : \theta_0(s,t) \right), \qquad \text{where }
		(tz - s)^{2d-2} = \sum_{i = 0}^{2d-2} \theta_i(s,t) z^i.
		% \theta_i(s,t) =  \binom{2d-2}{i} t^i (-s)^{2d-2 - i}.
	\]
Let $C$ be the image of $\theta$ in $\PP^{2d-2}$; it is a rational curve when $d > 1$.  By construction, if $\omega(\phi) = \theta(s : t) \in C$, then $(s : t)$ is the unique critical point of $\phi$. 

	Now define the unicritical locus $U_d$ by the fiber product square
	\begin{equation}	
	\label{Diagram: Fiber Square}
		\xymatrix{
			U_d \ar[r] \ar[d] & \Rat_d \smallsetminus \Insep_d \ar[d]^{\omega} \\
			C \ar[r] & \PP^{2d-2}.
		}
	\end{equation}		
Since the bottom arrow is a closed immersion, we find $U_d$ is a closed subvariety of $\Rat_d \smallsetminus \Insep_d$. 

\begin{remark}
	The map $\theta: \PP^1 \to \PP^{2d-2}$ is not a closed immersion in general. But one can show that it factors as $\tilde \theta \circ \mathrm{Frob}_{\PP^1 / \FF_p}^r$ for some $r \geq 0$, where $\mathrm{Frob}_{\PP^1 / \FF_p}: \PP^1_{\FF_p} \to \PP^1_{\FF_p}$ is the relative Frobenius map and $\tilde \theta$ is a closed immersion. In particular, $C \cong \PP^1$ when $d > 1$.   % \xhelp{Check me carefully!}
\end{remark}

\begin{remark}
	Observe that this construction gives $U_1 = \Rat_1 = \PGL_2$. 
\end{remark}

\begin{remark}
	Over $\CC$, the morphism $\Rat_d \stackrel{\omega}{\longrightarrow} \PP^{2d-2}$ has dense image. As $\phi$ and $\sigma \circ \phi$ have the same critical points for any $\sigma \in \mathrm{SL}_2(\CC)$, the morphism $\omega$ descends to the quotient $\mathrm{SL}_2 \setminus \Rat_d \to \PP^{2d-2}$. The latter morphism is quasi-finite of generic degree $\frac{1}{d}\binom{2d-2}{d-1}$, the $d$th Catalan number \cite{Goldberg_Catalan_1991}. In particular, the fibers of $\omega$ are $3$-dimensional.
	
	In positive characteristic, the fibers of $\omega$ can be far more wild. For example, Theorem~\ref{Thm: Unicritical Geometry} implies that the fiber of $\omega$ over any point of $C$ has dimension at least~4, and that this dimension grows with~$d$. See also \cite{Osserman_Deformations_2005, Osserman_Ramification_char_p_2006} for  discussions of postcomposition classes of rational functions in characteristic~$p$ with more general ramification structure. Note that, with the exception of the simplest case $p = d = 2$, unicritical rational functions are not treated in these works. 
\end{remark}

%%%%%%%
%%%%%%%

\subsection{Preliminaries for the Main Results}
\label{Sec: Preliminaries}
	
	For each field $F$ of characteristic~$p$ and each $c \in \PP^1(F)$, write $U_d^{(c)}$ for the fiber of $U_d \to C$ over the point $\theta(c)$.  Of greatest interest will be the case $c = \infty$, so that $\theta(\infty) = (0 : \cdots : 0 : 1)$. The $F$-rational points of $U_d^{(c)}$ are unicritical rational functions of degree~$d$ with critical point~$c$.
	
\begin{lem}
\label{Lem: Smaller space}
	Let $F$ be a field of characteristic~$p$. 
	If $\phi \in F(z)$ is a rational function with no finite critical point, then there exist polynomials
	$f_1, f_2, g_1, g_2 \in F[z]$ such that 
		\begin{equation}
		\label{Eq: Linear subspace}
			\phi(z) = \frac{f_1(z^p) + zf_2(z^p)}{g_1(z^p) + zg_2(z^p)}, 
		\end{equation}
	and $f_2g_1 - f_1g_2$ is a nonzero constant function. 
\end{lem}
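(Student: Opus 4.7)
The plan is to invoke Theorem~\ref{Thm: No Finite Critical} to obtain a continued fraction expansion
\[
\phi(z) = [q_0(z^p), q_1(z^p), \ldots, q_n(z^p) + az], \qquad a \in F \smallsetminus \{0\},
\]
and then to induct on $n$. For the base case $n = 0$, write $\phi(z) = q_0(z^p) + az$ as $(q_0(z^p) + z \cdot a)/(1 + z \cdot 0)$, so that $f_1 = q_0$, $f_2 = a$, $g_1 = 1$, $g_2 = 0$, giving $f_2 g_1 - f_1 g_2 = a \neq 0$.

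For the inductive step, set $\psi = [q_1(z^p), \ldots, q_n(z^p) + az]$, so that $\phi = q_0(z^p) + 1/\psi$. Since $\psi$ has a continued fraction expansion of length one less than that of $\phi$, the inductive hypothesis furnishes a representation $\psi = (f_1(z^p) + z f_2(z^p))/(g_1(z^p) + z g_2(z^p))$ with $f_2 g_1 - f_1 g_2$ a nonzero constant. The crux is then the observation that the class of rational functions admitting such a representation is closed under the two operations used to build the continued fraction. Reciprocation swaps the pair $(f_1, f_2)$ with $(g_1, g_2)$, changing $f_2 g_1 - f_1 g_2$ only by a sign; and adding an inseparable polynomial $h(z^p)$ sends $(f_1, f_2)$ to $(f_1 + h g_1, f_2 + h g_2)$ while fixing $(g_1, g_2)$, leaving $f_2 g_1 - f_1 g_2$ unchanged. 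Both verifications are one-line computations using the fact that evaluation at $z^p$ is a ring homomorphism, so $h(z^p) g_i(z^p) = (h g_i)(z^p)$.

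The main point — more accurately, the only non-routine observation — is that the quantity $f_2 g_1 - f_1 g_2$, viewed as a $2 \times 2$ determinant, is (up to sign) invariant under precisely the two moves that generate the continued fraction. Once this invariance is spotted, the induction carries itself and no serious obstacle remains.
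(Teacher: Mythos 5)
Your proof is correct, and the inductive skeleton --- build $\phi$ from a polynomial by alternately taking reciprocals and adding inseparable polynomials, checking that the class of functions of the form~\eqref{Eq: Linear subspace} is closed under both moves --- is exactly the paper's. Where you diverge is in how the condition on $f_2g_1 - f_1g_2$ is obtained. You carry it through the induction as an invariant: reciprocation swaps $(f_1,f_2)$ with $(g_1,g_2)$ and flips the sign of the determinant, while adding $h(z^p)$ replaces $(f_1,f_2)$ by $(f_1+hg_1,\,f_2+hg_2)$ and leaves it fixed, so the nonzero constant $a$ from the base case propagates all the way up. The paper instead establishes only the representation~\eqref{Eq: Linear subspace} by induction and then deduces the determinant condition in one stroke at the end: the quotient rule gives that the numerator of $\phi'$ is precisely $(f_2g_1 - f_1g_2)(z^p)$, and since $\phi$ has no finite critical point this must be a nonzero constant. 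The paper's route is shorter and reuses the hypothesis on critical points directly; yours is more self-contained (it never differentiates and would apply verbatim to any function with a continued fraction of the shape in Theorem~\ref{Thm: No Finite Critical}) and makes visible the $\mathrm{SL}_2$-style determinant invariance underlying the continued fraction algorithm. Both are complete proofs.
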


\begin{proof}
	If $\phi$ is a polynomial, then Theorem~\ref{Thm: No Finite Critical} shows we may write $\phi$ in this form with $f_2$ a nonzero constant, $g_1 = 1$, and $g_2 = 0$. More generally, we note that if $\phi$ can be written in the desired form, then so can $q(z^p) + 1 / \phi(z)$ for any polynomial $q \in F[z]$. Theorem~\ref{Thm: No Finite Critical} shows that any $\phi \in \uni(F)$ can be constructed recursively from a polynomial by inversion and addition of an inseparable polynomial, so the proof of the first statement is complete after an appropriate induction. 
	
	If $\phi$ has no finite critical point, then the formal derivative $\phi'$ has no finite zero. In particular, the 
	numerator of $\phi'$ must be a nonzero constant. The final statement is now a direct calculation using the 
	quotient rule.
\end{proof}

\begin{lem}
\label{Lem: Dimension bound}
	Fix a positive integer $d \equiv 0 \text{ or } 1 \pmod p$. 
	Then the dimension of each irreducible component of $\uni$ is at least 
	\[
			\begin{cases} 
				2 + 2d / p & \text{if } p \mid d \\
				3 + 2(d-1) / p & \text{if } p \mid d-1.
			\end{cases}
	\]
\end{lem}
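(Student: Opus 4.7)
The plan is to realize $\uni$ as an open subvariety cut out by an explicit number of quadric equations inside a linear subspace of $\PP^{2d+1}$ by means of Lemma~\ref{Lem: Smaller space}, and then to apply Krull's Hauptidealsatz to bound the dimension of each irreducible component from below.

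First I would set up the parametrization. The assignment $(f_1, f_2, g_1, g_2) \mapsto (f_1(z^p) + z f_2(z^p),\, g_1(z^p) + z g_2(z^p))$ is linear and injective at the level of coefficients, so it realizes the space of admissible tuples as a linear subvariety $V \subset \PP^{2d+1}$. For $\deg(\phi) \leq d$ to hold, the degrees of the $f_i$ and $g_i$ must be constrained. In the case $p \mid d$ with $d = pm$, one needs $\deg f_1, \deg g_1 \leq m$ and $\deg f_2, \deg g_2 \leq m-1$; counting coefficients gives $V \cong \PP^{4m+1}$. In the case $p \mid d-1$ with $d = pm+1$, one needs $\deg f_i, \deg g_i \leq m$ for $i = 1, 2$, so $V \cong \PP^{4m+3}$.

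Second, I would impose the condition from Lemma~\ref{Lem: Smaller space} that $f_2 g_1 - f_1 g_2$ be a constant by requiring each of its nonconstant coefficients to vanish. This polynomial has degree at most $2m-1$ when $p \mid d$, giving $2m-1$ bilinear (hence quadric) equations on $V$; and degree at most $2m$ when $p \mid d-1$, giving $2m$ quadric equations. Let $W \subset V$ be the closed subvariety they cut out. Krull's Hauptidealsatz then implies that every irreducible component of $W$ has dimension at least $(4m+1)-(2m-1) = 2 + 2d/p$ in the first case, and at least $(4m+3)-2m = 3 + 2(d-1)/p$ in the second. Now $\uni$ is the open subvariety of $W$ where the constant value of $f_2 g_1 - f_1 g_2$ is nonzero and where $\deg \phi = d$ (an open condition on leading coefficients); passing to an open subvariety preserves the dimensions of irreducible components, so the lower bounds transfer to $\uni$.

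The main obstacle is confirming that $W$ genuinely parametrizes $\uni$ and not a strictly larger locus, i.e., the converse to Lemma~\ref{Lem: Smaller space}: any tuple with $f_2 g_1 - f_1 g_2$ a nonzero constant yields a rational function of degree exactly $d$ with no finite critical point. This follows from the derivative calculation
\[
\phi'(z) = \frac{(f_2 g_1 - f_1 g_2)(z^p)}{\bigl(g_1(z^p) + z g_2(z^p)\bigr)^2},
\]
which simultaneously shows that the numerator of $\phi'$ has no zeros in the algebraically closed field $F$ and that the numerator and denominator of $\phi$ share no common root (otherwise $\phi'$ would vanish at such a root), so $\phi$ is genuinely of degree $d$.
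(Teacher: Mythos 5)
Your proof is correct and follows essentially the same route as the paper: both realize $\uni$ inside the linear subspace of $\PP^{2d+1}$ given by Lemma~\ref{Lem: Smaller space}, cut it out by the $2d/p-1$ (resp.\ $2(d-1)/p$) vanishing conditions on the nonconstant coefficients of $f_2g_1-f_1g_2$, and invoke Krull's Hauptidealsatz. Your explicit verification of the converse to Lemma~\ref{Lem: Smaller space} via the computation $\phi' = (f_2g_1-f_1g_2)(z^p)/\bigl(g_1(z^p)+zg_2(z^p)\bigr)^2$ is a detail the paper asserts without proof, and it is a welcome addition.
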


\begin{proof}
	We prove the case $p \mid d$, and leave the other case to the reader.
	
	Consider the linear subspace $L_d \subset \Rat_d$ defined by
		\[
			L_d = \{ \phi \in \Rat_d : a_i = b_i = 0 \text{ if $i \not\equiv 0$ or $1\pmod{p}$}\},
		\]
where the coordinates $a_i, b_j$ are as in~\eqref{Eq: Coordinates}. Then $L_d$ consists of the rational functions of the form~\eqref{Eq: Linear subspace}. For a generic element $\phi \in L_d$, we must have $\deg(f_1) =  \deg(g_1) = d / p$, and $\deg(f_2) = \deg(g_2) = d / p - 1$. 

	The previous lemma shows that $\phi \in \uni$ is equivalent to $f_2g_1 - f_1g_2$ being a nonzero constant. Since  $\deg(f_2g_1 - f_1g_2) = 2d / p - 1$, the vanishing of the nonconstant coefficients of $f_2g_1 - f_1g_2$ is a codimension $2d / p - 1$ condition on $L_d$. That is, $\uni$ is cut out from a Zariski open subset of $L_d$ by at most $2d / p - 1$ equations. Therefore, each irreducible component of $\uni$ has dimension at least 
		$\dim(L_d) - (2d/p - 1) =  2 + 2d/p$.
\end{proof}
	
	Given an arbitrary rational function $\phi$ of degree~$d$, let $\phi = [f_0, \ldots, f_n]$ be its continued fraction expansion. The \textbf{signature} of $\phi$ is the tuple of nonnegative integers given by 
	\[
		\sign(\phi) = \left(\deg^+(f_0), \deg(f_1), \ldots, \deg(f_n) \right). 
	\]
Here we have written $\deg^+(f) = \max \{\deg(f), 0\}$ for simplicity. Observe that if $\phi$ is a rational function with signature $\kappa = (\kappa_0, \ldots, \kappa_n)$, then $\deg(\phi) = \sum \kappa_i$. In particular, there are only finitely many signatures for rational functions of fixed degree. 
	
	Now let $F$ be a field of characteristic~$p$, and suppose $\phi \in \uni(F)$. By Theorem~\ref{Thm: No Finite Critical}, we may write $\phi(z) = [q_0(z^p), q_1(z^p), \ldots, q_n(z^p)+ az]$ for some polynomials $q_i \in F[z]$ and a nonzero element $a \in F$. It follows that the signature of $\phi$ is 
	\[
		\sign(\phi) = \begin{cases}
			\left(p\cdot \deg^+(q_0), \ldots, p\cdot \deg(q_n) \right) & \text{if } p \mid d \\
			\left(p\cdot \deg^+(q_0),  \ldots, p\cdot \deg(q_{n-1}), 1) \right) & \text{if } p \mid d-1.
		\end{cases}
	\]
	
	Fix an integer $d \geq 1$ and a tuple $\kappa = (\kappa_1, \ldots, \kappa_n)$ of nonnegative integers. Define $\uni(\kappa)$ to be the space of rational functions of degree~$d$ with signature~$\kappa$ and no finite critical point. 
	
\begin{prop}
\label{Prop: Zariski open}
	Fix a positive integer $d \equiv 0 \text{ or }1 \pmod{p}$. If $p \mid d$, define $\kappa^\circ = (0, \underbrace{p, \ldots, p}_{d / p})$. If $p \mid d-1$, define $\kappa^\circ = (0, \underbrace{p, \ldots, p}_{(d-1)/p}, 1)$. Then $\uni(\kappa^\circ)$ is nonempty and Zariski open in $\uni$. 
\end{prop}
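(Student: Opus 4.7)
My plan is to establish the two claims separately.

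For \textbf{nonemptiness}, I would simply exhibit an element. In the $p \mid d$ case, take
$\phi = [0, z^p, z^p, \ldots, z^p, z^p + z]$, with $n = d/p$ entries after the leading $0$. The degree formula recalled before Corollary~\ref{Cor: Congruence} gives $\deg\phi = d$; Theorem~\ref{Thm: No Finite Critical} gives $\phi \in \uni$; and the signature is $\kappa^\circ$ by inspection. A parallel construction with a trailing entry of the form $q_n + z$ ($q_n$ a constant) handles the $p \mid d-1$ case.

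For \textbf{openness}, the key observation is that $\kappa^\circ$ is the \emph{maximal-length} signature within $\uni$. For any $\phi \in \uni$ with continued fraction $[f_0, f_1, \ldots, f_n]$, Theorem~\ref{Thm: No Finite Critical} forces $f_i = q_i(z^p)$ for $1 \leq i \leq n - 1$ with $q_i$ nonconstant, so $\deg f_i \geq p$, and a similar bound holds for $\deg f_n$. Together with the degree formula this yields $n + 1 \leq d/p + 1$ in the $p \mid d$ case and $n + 1 \leq (d-1)/p + 2$ in the $p \mid d - 1$ case, with equality precisely for $\kappa^\circ$. Hence $\uni(\kappa^\circ)$ is the locus where the Euclidean algorithm applied to the numerator and denominator of $\phi$ does not terminate before the maximum possible number of steps $n_{\max}$. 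For each $k < n_{\max}$, the condition ``termination at step $k$'' is cut out by the vanishing of a remainder polynomial whose coefficients, after clearing denominators from the successive polynomial divisions, are polynomials in the coefficients of $\phi$; the union of these loci is a finite union of closed subvarieties of $\uni$, and its complement $\uni(\kappa^\circ)$ is therefore Zariski open.

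The main obstacle is verifying rigorously that termination at each step is really a polynomial (rather than merely rational) condition on $\phi$, and tracking the leading coefficients through the Euclidean algorithm so that the remainders remain well-defined throughout $\uni$. A cleaner alternative that sidesteps this bookkeeping is to construct a rational inverse to the continued fraction parametrization of $\uni(\kappa^\circ)$ coming from Theorem~\ref{Thm: No Finite Critical}, and invoke the fact that the domain of definition of a rational map is Zariski open; uniqueness of the continued fraction expansion then ensures that this domain meets $\uni$ exactly in $\uni(\kappa^\circ)$.
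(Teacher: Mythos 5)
Your setup is right: the nonemptiness witness is essentially the paper's (which uses $[1, z^p, \ldots, z^p+z]$), and your observation that $\kappa^\circ$ is the unique signature of maximal length among elements of $\uni$ --- so that $\uni(\kappa^\circ)$ is exactly the locus where the continued fraction algorithm runs for the maximal number of steps, each quotient being as small as possible --- is correct and is implicitly the engine of the paper's argument too. The gap is the one you flag yourself, and it is the actual content of the proof: you have not shown that early termination is a closed condition. The coefficients of the $\ell$-th remainder $r_\ell$ are rational functions of the $a_i, b_j$ whose denominators involve the leading coefficients of the earlier remainders; after clearing denominators, the resulting polynomial $\tilde r_k$ vanishes automatically at any point where one of those leading coefficients vanishes, so $\{\tilde r_k = 0\}\cap \uni$ does not obviously coincide with the early-termination locus (one must argue separately, as in the theory of subresultants, that a vanishing leading coefficient already forces the signature away from $\kappa^\circ$). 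Your fallback via a rational inverse to the parametrization has a different problem: to identify the domain of definition of that rational map with $\uni(\kappa^\circ)$ you need $\uni(\kappa^\circ)$ to be dense in $\uni$ (otherwise the ``rational map'' only lives on $\overline{\uni(\kappa^\circ)}$ and its domain is open there, not in $\uni$), and density is only established in Theorem~\ref{Thm: Unicritical Geometry} \emph{using} this proposition.

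The paper closes the gap by running the induction on nested open sets rather than hunting for global closed conditions. The condition $\deg^+(q_0)=0$ is the open condition $b_d \neq 0$. Inductively, suppose the conditions $\deg^+(q_0)=0$, $\deg(q_1)=\cdots=\deg(q_\ell)=1$ cut out an open subset $V_\ell$ of $\uni$ on which the coefficients of the remainders $r_{\ell-1}, r_\ell$ are \emph{regular} functions of the coordinates (no denominator vanishes there, by construction). Because $\phi \in \uni$, one automatically has $\deg_z(r_\ell) \leq \deg_z(r_{\ell-1}) - p$, and the next condition $\deg(q_{\ell+1})=1$ is equivalent to equality here, i.e.\ to the nonvanishing of a single regular function on $V_\ell$, namely the coefficient of $r_\ell$ in degree $\deg_z(r_{\ell-1})-p$. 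This cuts out $V_{\ell+1}$ open in $V_\ell$, hence open in $\uni$, and the remainders at the next stage are again regular there, so the induction continues. This is precisely the bookkeeping you identified as ``the main obstacle''; without it (or a subresultant argument in its place) the proof is not complete.
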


\begin{proof}
	The proofs of the two statements are similar, so we only treat the case $p \mid d$. Evidently $\uni(\kappa^\circ)$ is nonempty; for example, it contains the function $\phi(z) = [1, z^p, z^p, \ldots, z^p + z]$. 
	
	In general, write $\phi$ as in \eqref{Eq: Coordinates}, and also suppose $\phi = [q_0(z^p), q_1(z^p), \ldots, q_n(z^p) + az]$. We proceed by induction to show that $\deg(q_0) = 0$ and $\deg(q_i) = 1$ for $i > 0$ are open conditions. The requirement that $q_0$ be a constant is equivalent to $b_d \neq 0$. Now suppose the conditions on $q_0, \ldots, q_\ell$ are all open for some $\ell \geq 0$. The construction of the continued fraction expansion of $\phi$ gives 
	\[
		\phi =  q_0 + \frac{1}{q_1(z^p) +  \frac{1}{\ddots q_\ell(z^p) + \frac{1}{r_{\ell - 1}(z) / r_\ell(z)} } },
	\]
where $r_{\ell - 1}, r_\ell$ are polynomials in $z$ with $\deg_z(r_{\ell - 1}) > \deg_z(r_\ell)$ and whose coefficients are rational functions in the $a_i$'s and $b_j$'s. In fact, since $\phi \in \uni$, we see that $\deg_z(r_\ell) \leq \deg_z(r_{\ell - 1}) - p$. The coefficients of $r_{\ell - 1}, r_\ell$ are regular functions on a Zariski open subset of $\Rat_d$, and equality holds in this last inequality if and only if some rational function in the $a_i$'s and $b_j$'s does not vanish --- namely, the leading coefficient of $r_\ell$. Hence, on a Zariski open subset of $\Rat_d$, we find that $r_{\ell - 1} / r_\ell = q_{\ell + 1}(z^p) + r_{\ell + 1} / r_\ell$ for some linear polynomial $q_{\ell +1}$ and some polynomial $r_{\ell + 1}$ with $\deg_z(r_{\ell + 1}) < \deg_z(r_\ell)$. By induction, one obtains the desired result. 
\end{proof}

%%%%%%%%%%%
%%%%%%%%%%%

\subsection{Proofs of the Main Results}
\label{Sec: Proofs}

\begin{proof}[Proof of Theorem~\ref{Thm: Unicritical Geometry}]
	We prove the theorem when $p \mid d$. The other case is similar.
	
	Given a rational function $\phi \in U_d(F)$ with critical point $c$, we can precompose with an element $\sigma \in \PGL_2(F)$ to see that $\phi \circ \sigma$ has critical point $\sigma^{-1}(c)$.  As this action is algebraic and invertible, the fibers of $U_d$ over $\theta(c)$ and over $\theta(\sigma^{-1}(c))$ are isomorphic. In particular, choosing $\sigma$ so that $\sigma(\infty) = c$, we find that $U_d^{(c)} \cong \uni$. Since $U_d$ is fibered over the rational curve $C$ and all of the fibers are isomorphic, it suffices to show that $\uni$ is irreducible and rational of dimension $2 + 2d / p$.
	
	Fix a signature $\kappa = (\kappa_0, \ldots, \kappa_n)$ such that $\uni(\kappa)$ is nonempty. Let $\Poly_i$ be the space of polynomials in one variable of degree~$i$; evidently the coefficients of the polynomial identify $\Poly_i$ with $(\Aff^1 \smallsetminus \{0\}) \times (\Aff^1)^i$. Define a morphism 
	\begin{align*}
		\Poly_{\kappa_0 / p} \times \cdots \times \Poly_{\kappa_n / p} \times \left(\Aff^1 \smallsetminus\{0\}\right)
			&\to \uni(\kappa) \\
		 (q_0, \ldots, q_n, a) &\mapsto [q_0(z^p), \ldots, q_n(z^p) + az].
	\end{align*}
Theorem~\ref{Thm: No Finite Critical} shows this map is bijective on geometric points, and hence $\uni(\kappa)$ is the birational image of a variety of dimension $1 + \sum (\kappa_i / p+ 1) = 2 + d / p + n$.

	% Write $\kappa^{\circ} = (0, \underbrace{p, \ldots, p}_{d/p})$. 
	Proposition~\ref{Prop: Zariski open} shows that $\uni(\kappa^{\circ})$ is Zariski open in $\uni$, and the last paragraph shows it is irreducible of dimension 
$2 + 2d / p$. If $\kappa = (\kappa_0, \ldots, \kappa_n) \neq \kappa^{\circ}$, then $n < d / p$. The previous paragraph shows
	\[
		\dim(\uni(\kappa)) = 2 + d/p + n < 2 + 2d / p.
	\]
	
	Write $\uni$ as the union of two Zariski closed subsets: 
	\[
		\uni = \overline{\uni(\kappa^\circ)} \cup \left[ \uni \smallsetminus \uni(\kappa^{\circ}) \right].
	\]
The second set is the image of finitely many varieties of dimension strictly smaller than $2 + 2d / p$, so that each of its irreducible components has dimension strictly smaller than $2 + 2d / p$. Since each of the irreducible components of $\uni$ must have dimension at least $2 + 2d / p$ (Lemma~\ref{Lem: Dimension bound}), we conclude that $\uni(\kappa^{\circ})$ is dense in $\uni$. That is, $\uni$ is irreducible and rational of dimension $2 + 2d/p$.
\end{proof}

\begin{proof}[Proof of Theorem~\ref{Thm: Unicritical Conjugacy}]
	Write $\pi: U_d \to \mathfrak{U}_d = U_d / \SL_2$ for the quotient map. As $U_d \subset \Rat_d$ is an invariant subset of the stable locus for the conjugation action of $\SL_2$ \cite{Silverman_Rational_Maps_1998}, it follows that $\pi: U_d \to \mathfrak{U}_d$ is a geometric quotient. In particular, it is surjective, so that $\mathfrak{U}_d$ is irreducible by Theorem~\ref{Thm: Unicritical Geometry}. 
	
	Let $Y \subset U_d^{(\infty)}(\kappa^\circ)$ be the subvariety of rational functions $\phi$ satisfying $\phi(\infty) = 0$ and $\phi(0) = 1$. The theorem is immediately implied by the following three claims about $Y$.
	
	\textbf{Claim 1: $Y$ is a rational variety.} We treat the case $p \mid d$; the other case is similar. 
Consider the morphism 
	\begin{align*}
		\left(\Aff^1 \smallsetminus\{0\}\right) \times \Poly_{1} \times \cdots \times \Poly_{1} \times \left(\Aff^1 \smallsetminus\{0\}\right)
			&\to Y \\
		 (\alpha, q_2, \ldots, q_n, \gamma) 
		 &\mapsto [0, \alpha z^p + \beta(q_2, \ldots, q_n), q_2(z^p), \ldots, q_n(z^p) + \gamma z],
	\end{align*}
where $\beta: \Poly_1 \times \cdots \times \Poly_1 \to \Aff^1$ is given by
	\[
		\beta(q_2, \ldots, q_n) = 1 - \frac{1}{q_2(0) + \frac{1}{\ddots + \frac{1}{q_n(0)}}}.
	\]
Evidently this morphism is injective on geometric points, and we claim it is also a surjection. To see it, observe that any $\phi \in Y(F)$ has a continued fraction expansion of the form 
	\[
		\phi(z) = [q_0(z^p), q_1(z^p), q_2(z^p), \ldots, q_n(z^p) + \gamma z]
	\]
for some constant polynomial $q_0$, linear polynomials $q_1, \ldots, q_n$, and nonzero $\gamma \in F$. To be in $Y$, $\phi$ must satisfy $\phi(\infty) = 0$ and $\phi(0) = 1$. The former condition translates into $q_0 = 0$, while the latter is then equivalent to
	\[
		1 = \phi(0) = \frac{1}{q_1(0) + \frac{1}{q_2(0) + \frac{1}{\ddots + \frac{1}{q_n(0)}}}}.
	\]
Solving for $q_1(0)$ gives $q_1(0) = \beta(q_2, \ldots, q_n)$. It follows that we may write $q_1(z^p) = \alpha z^p + \beta(q_2, \ldots, q_n)$ for some nonzero $\alpha \in F$. This completes the proof. 
	
	\textbf{Claim 2: $\pi|_Y$ is dominant.} Let $c, \phi(c), \phi^2(c)$ be the critical point of $\phi$, its critical value, and the image of the critical value under $\phi$, respectively. For a generic $\phi \in U_d$, these three points are distinct. Let $\sigma \in \SL_2$ be an automorphism that maps $\infty, 0, 1$ to $c, \phi(c), \phi^2(c)$, respectively. Then $\psi = \sigma^{-1} \circ \phi \circ \sigma \in U_d^{(\infty)}$, $\psi(\infty) = 0$, and $\psi(0) = 1$. As $U_d^{(\infty)}(\kappa^\circ)$ is open in $U_d^{(\infty)}$ (Proposition~\ref{Prop: Zariski open}), we see that a generic $\phi \in U_d$ is conjugate to an element of $Y$. Hence $\pi|_Y$ is dominant. 
	
	\textbf{Claim 3: $\pi|_Y$ is injective on geometric points.} Let $F$ be an algebraically closed field, and let $\phi, \psi \in Y(F)$ be such that $\pi(\phi) = \pi(\psi)$. Then there exists $\sigma \in \SL_2(F)$ such that $\sigma \circ \psi = \phi \circ \sigma$. Since $\infty$ is the unique critical point of both $\phi$ and $\psi$, since $\phi(\infty) = \psi(\infty) = 0$, and since $\phi(0) = \psi(0) = 1$, it follows that $\sigma$ fixes the three points $0, 1, \infty$. Hence $\sigma$ is the identity, so that $\phi = \psi$. 
\end{proof}

\begin{proof}[Proof of Theorem~\ref{Thm: Classes}]
	We treat only the case $p \mid d$. Without loss of generality, we may assume $c = \infty$. A generic element of $U_{d,e}^{(\infty)}$ does not fix $\infty$. Let $\phi(z) = [q_0(z^p), q_1(z^p), \ldots, q_n(z^p) + az]$ be the continued fraction expansion of~$\phi$, according to Theorem~\ref{Thm: No Finite Critical}. Then $q_0$ is constant and $e = p \cdot \deg(q_1)$. Moreover, a generic element of $U_{d,e}^{(\infty)}$ has signature $(0, e, \underbrace{p, \ldots, p}_{(d-e) / p})$. (Compare with Proposition~\ref{Prop: Zariski open}.) Counting the coefficients of the polynomials that define a unicritical rational function with signature $(0, e, p, \ldots, p)$ as in the proof of Theorem~\ref{Thm: Unicritical Geometry}, we find that
	\[
		\dim(U_{d,e}^{(\infty)}) = 1 + \left(\frac{e}{p}+1\right) + 2\frac{d- e}{p} + 1 = 3 + \frac{2d - e}{p}.
	\]
Since $\SL_2$ has dimension~3 and acts on $U_{d,e}^{(\infty)}$ without fixed points, the result follows. 
\end{proof}

%%%%%%%%%%%
%%%%%%%%%%%

\subsection{Example: Unicritical Functions of Degree~$p$}
\label{Sec: Example}

	Using the ideas from the previous sections, we may realize $U_p^{(\infty)}$ as a Zariski open subset of a quadric hypersurface in $\PP^5$. Indeed, if $\phi \in U_p^{(\infty)}$, then Lemma~\ref{Lem: Smaller space} shows that 
	\[
		\phi(z) = \frac{a_pz^p + a_1z + a_0}{b_pz^p + b_1z + b_0}
	\]
for some coefficients $a_i, b_j$. Such a rational function may be identified with the point $(a_p : a_1 : a_0 : b_p : b_1 : b_0) \in L_p \subset \PP^5$. Write $\Res_p(\phi)$ for the resultant of the rational function $\phi$. The numerator of $\phi'$ is $(a_1 b_p - a_p b_1)z^p + (a_1b_0 - a_0 b_1)$. Then
	\[
		U_p^{(\infty)} =  \{a_1 b_p = a_p b_1\} \cap \{ \Res_p \neq 0\} \cap \{ a_1b_0 \neq a_0 b_1\}  \subset L_p.
	\]	
In particular, $U_p^{(\infty)}$ is irreducible of dimension~4. Moreover, the critical point must have ramification index~$p$, so that in the notation of Theorem~\ref{Thm: Classes}, we have $U_{p, p}^{(\infty)} = U_p^{(\infty)}$. It follows that the space of unicritical postcomposition classes $\SL_2 \backslash U_{p, p}^{(\infty)}$ has dimension~1, as expected.  
% \xhelp{Can I compute it?}

	Finally, let $\mathrm{Aff}_2$ be the subgroup of $\SL_2$ that stabilizes $\infty$; more concretely, it consists of maps of the form $z \mapsto Az + B$. Then the space  of unicritical conjugacy classes  satisfies $\mathfrak{U}_p = U_p / \SL_2 = U_p^{(\infty)} / \mathrm{Aff}_2$, which has dimension~2.  
% \xhelp{Can I compute it?}

%%%%%%%%%%%%%%%%%%%
%%%%%%%%%%%%%%%%%%%

\section{An Application}
\label{Sec: Application}

	Here we present the proof of Corollary~\ref{Cor: Application} after some necessary definitions.

	Let $\varphi \in k(z)$ be a nonconstant rational function written as $\varphi = f / g$ with $f, g \in k[z]$ having no common root. After multiplying $f$ and $g$ by a suitable element of $k$,  we may suppose further that $f$ and $g$ have coefficients in $\OO_k$ and that $f$ or $g$ has a coefficient in $\OO_k^\times = \{x \in \OO_k : |x| = 1\}$. This normalization of $f$ and $g$ is unique up to simultaneous multiplication by an element of $\OO_k^\times$. Write $\tilde f$ and $\tilde g$ for the images of $f$ and $g$ in $\tilde k[z] = \OO_k[z] / \mm[z]$. The \textbf{reduction} of $\varphi = f/g$ is defined to be the map $\widetilde{\varphi}: \PP^1(\tilde k) \to \PP^1(\tilde k)$ given by 
	\[
		\widetilde{\varphi} = \begin{cases}
				\tilde f / \tilde g & \text{if } \tilde g \neq 0 \\
				\infty & \text{if } \tilde g = 0.
			\end{cases}
	\]
We say that $\varphi$ has \textbf{good reduction} if $\deg(\varphi) = \deg(\widetilde{\varphi})$. 

\begin{proof}[Proof of Corollary~\ref{Cor: Application}]
	Let $\tilde c \in \PP^1(\tilde k)$ be the image of the critical points of $\varphi$ under the reduction map. Without loss of generality, we may replace $\varphi$ with $\sigma^{-1} \circ \varphi \circ \sigma$ for some $\sigma \in \PGL_2(\OO_k)$ in order to assume that $\tilde c = 0$. More precisely, if $\tilde c \neq \infty$, then we take $\sigma(z) = z + c$ for some critical point $c$ of $\varphi$; if $\tilde c = \infty$, then we take $\sigma(z) = 1 / z$. Conjugating by $\sigma \in \PGL_2(\OO_k)$ preserves good reduction (as well as the the degree of $\varphi$). 
	
	Suppose first that the reduction $\widetilde{\varphi}$ is inseparable; that is, there exists a rational function $\psi \in \tilde k(z)$ such that $\widetilde{\varphi}(z) = \psi(z^p)$. Then
	\[
		\deg(\varphi) = \deg(\widetilde{\varphi}) = p \cdot \deg(\psi), 
	\]
so that $p \mid \deg(\varphi)$. 

	Now suppose that the reduction $\widetilde{\varphi}$ is separable. Then $\widetilde{\varphi}$ has only finitely many critical points by the Hurwitz formula. Writing $\varphi = f / g$ with $f, g \in \OO_k$ normalized, we see that the critical points of $\varphi$ are given by the roots of the polynomial
	\[
		f'(z) g(z) - f(z) g'(z) = a \prod (z - c_i),
	\]
where $a \in \OO_k$ and $c_1, \ldots, c_{2d - 2} \in \OO_k$ are the critical points of $\varphi$. Here we have written $d = \deg(\varphi) = \deg(\widetilde{\varphi})$. Since $\widetilde{\varphi} = \tilde f / \tilde g$ has the same degree as $\varphi$, it follows that the critical points of $\widetilde{\varphi}$ are given by the roots of the polynomial 
	\[
		\tilde f'(z) \tilde g(z) - \tilde f(z) \tilde g'(z) = \tilde a \prod (z - \tilde c_i) = \tilde a z^{2d-2}.
	\]
Note that $\tilde a \neq 0$ since $\widetilde{\varphi}$ is separable. It follows that $\widetilde{\varphi}$ has a unique critical point. An application of Corollary~\ref{Cor: Congruence} completes the proof. 	
\end{proof}

\bibliographystyle{plain}
\bibliography{xander_bib}

\end{document}